\newtheorem{theorem}{ Theorem}[section]
\newtheorem{lemma}[theorem]{ Lemma}
\theoremstyle{definition}
\newtheorem{definition}[theorem]{ Definition}
\newtheorem{proposition}[theorem]{ Proposition}
\newtheorem{corollary}[theorem]{ Corollary}
 \newtheorem{example}[theorem]{ Example}
\newtheorem{remark}[theorem]{Remark}{\bf}{\rm}{\rm}
\newcommand{\cal}{\mathcal}
\def\F{\mu}
\def\L{\mathbb L}
\def\l{\lambda_w}
\def\lM{\lambda_w(M_L)}
\def\lk{{\mbox{lk}}}
\def\p{\partial}
\def\Q{\mathbb Q}
\begin{document}

\title{A simple formula  for the Casson-Walker invariant}
\author{Sergei Matveev and Michael Polyak}
\thanks{Both authors are partially
supported by the joint research grant of the Israel Ministry of Science (no 3-3577)
and Russian Foundation for Basic Research. The first author was also supported
by the integration project of Ural and Siberian branches of RAS. The second author was also supported by an ISF grant 1261/05.}

\address{Department of mathematics, Chelyabinsk State
University, Chelyabinsk, 454021, \linebreak \indent Russia}
\email{matveev\@@csu.ru}
\address{Department of mathematics, Technion, Haifa 32000, Israel}
\email{polyak\@@math.technion.ac.il}

\begin{abstract}
Gauss diagram formulas are extensively used to study Vassiliev link
invariants. Now we apply this approach to invariants of 3-manifolds, 
considering a manifold as a result $M_L$ of surgery on a framed link $L$ in $S^3$.
We study the lowest degree case -- the celebrated Casson-Walker invariant 
$\l$ of rational homology spheres.
This paper is dedicated to a detailed treatment of 2-component links; 
a general case will be considered in a forthcoming paper.
We present simple Gauss diagram formulas for $\lM$. This enables us to
understand/separate the dependence of $\lM$ on $L$ (considered as an 
unframed link) and on the framings. We also obtain skein relations for $\lM$ 
under crossing changes of $L$, and study an asymptotic behavior of $\lM$ 
when framings tend to infinity. Finally, we present results of extensive 
computer experiment on calculation of $\lM$. 
\end{abstract}

\maketitle

\section{Introduction}

One of the simplest types of formulas for Vassiliev invariants are
so-called Gauss diagram formulas. An invariant is calculated by
counting (with weights) subdiagrams of a special combinatorial
type in a given link diagram. The simplest example is counting
with signs crossings of two link components to get their doubled
linking number. This approach to finite type invariants was
successfully and extensively used for invariants of both classical
and virtual links (see e.g.\cite{GuPoVi,PoVi2}).

As a rule, techniques developed for link invariants were usually
later applied for 3-manifold invariants using the surgery
description of 3-manifolds. Indeed, configuration spaces integrals
(arising from the perturbative Chern-Simons theory) and the
Kontsevich integral were all adjusted and applied for 3-manifold
invariants. Surprisingly, the simplest of those -- the Gauss
diagram technique -- was not, until now, applied to invariants of
3-manifolds.

We start filling this gap by studying the case of the lowest
degree, namely the celebrated Casson invariant $\lambda(M_L)$ (or
rather its generalized versions -- the Casson-Walker invariant
$\lambda_w(M)$ and the Lescop invariant
$\lambda_L(M)=\frac{|H_1(M)|}{2}\lambda_w(M)$, see
~\cite{Walk,Lesc}). Note that $\lambda_w(M)$ is one of the
fundamental invariants of rational homology spheres. The
restriction of $\frac{1}{2}\lambda_w(M)$ to the class of integer
homology spheres is an integer extension of the Rokhlin invariant
~\cite{AkMc,Walk}. In the theory of finite type invariants of
3-manifolds (see e.g. ~\cite{GGP}) it is the simplest $\Q$-valued
invariant after $|H_1(M)|$. However, $\lambda_w(M)$ remains
 in general quite difficult to calculate. While it is easy
to do for a manifold $M_K$ obtained from $S^3$ by integer surgery
on a framed knot $K$, the same question for links remains quite
complicated, apart from the well-studied simple case of unimodular
algebraically split links. In particular, for 2-component links
satisfactory formulas only exist for some special
cases~\cite{KiLi}. Formulas from \cite{Lesc}, although explicit,
were not much applied or studied, possibly due to a large number
of terms of various nature and their complicated or cumbersome
definitions. While Lescop's general formulas for $\lambda_L(M)$
seem to imply all other formulas of \cite{KiLi, Joha1, Joha2},
including also formulas of this paper, we feel that a simple
explicit formula remains of a considerable interest.

Our approach is elementary, so allows a simple computation of the
invariant directly from any integer-framed link diagram and makes
many of its properties transparent. We proceed by the number of
components of a given framed link $L$. When $L$ is a knot, $\lM$
is closely related to the simplest finite type knot invariant
$v_2$ -- the second coefficient of the Alexander-Conway
polynomial. So its properties are easy to study knowing the
behavior of $v_2$; in particular, the Gauss diagram formula is
well-known (see \cite{PoVi1,PoVi2}). This paper is dedicated to a
similar detailed treatment of the 2-component case. The general
$n$-component case will be discussed in a forthcoming paper.

A special case of two component links with zero-framed components
was studied in \cite{KiLi}. While the invariant is described there
only by means of its behavior under a self-crossing of one of the
components, a Gauss diagram formula is in fact implicit there. We
show that their results extend to the case of arbitrary integer
framings and provide a simple Gauss diagram formula in this
general case. The main ingredient of this formula appeared first
in \cite{PoVi1} and is now known as the generalized Sato-Levine
invariant, see \cite{AMR,AR,KiLi,Mel}. The Gauss diagram formula
enables us to calculate the values of $\lM$ in a simple and
straightforward way starting from any diagram of a 2-component
link.

Our additional goal is to understand and separate the dependence
of $\lM$ on the underlying topological type of the link and on the
framings. This is also evident from the formula. In particular, we
observe an interesting asymptotics  of the Casson-Walker invariant
as both (or just one of) the framings are rescaled and taken to
infinity. It is interesting that in two special cases the
asymptotical behavior of $\lM$ turns out to be very different from
the general case. The first case appears when the framing of one
of the components is zero. This could be expected and reflects the
fact that the manifold obtained by surgery on this component is
not a homology sphere. Another -- somewhat more surprising --
special case is when the framings of two components are opposite
to each other.

We also deduce a simple skein relation for $\lambda_w(M)$ which
relates its change under a crossing change between two components
to its values on the smoothed link and the sublinks.

This has interesting implications for the theory of finite type (or
perturbative) invariants of 3-manifolds. Indeed, this theory is
well-studied only for integer homology spheres, and thus was defined
only on special classes of surgery links, with a complicated Borromean-type modification \cite{Mat} playing the role of a crossing change.
The behavior of an invariant under a self-crossing of one of the link
components $L_i$ easily fits into the theory (since it may be obtained
by adding a new small 1-framed component going around two strands of
$L_i$ near the crossing), and thus is well-understood \cite{Joha2}.
A crossing change between different components, however, changes the
homology of the resulting manifold, so does not fit the theory of
finite type invariants of 3-manifolds. Our work implies, however,
that the relation between finite type invariants of links and
3-manifolds is closer than one could expect and that the behavior of
these invariants under such crossing changes may be understood as well.

We remark that while in this paper we usually restrict our consideration
to the case when $M_L$ is a rational homology sphere, our formulas
are well defined also in case when $M_L$ is not such. In this case
our formula gives the Lescop's generalization of the Casson-Walker
invariant.

We sum up the goals of this paper in the following

{\bf Problem.} Given a framed 2-component link $L=\{L_1,L_2\}$
with the integer linking matrix $\L$, we want to
\begin{enumerate}
\item  Find simple Gauss diagram formulas for $\lM$;

\item Understand/separate the dependence of $\lM$ on $L$
(considered as an unframed link) and on $\L$.

\item Obtain skein relations for $\lM$ under crossing
changes of $L$.

\item Study an assymptotic behavior of $\lM$ when framings
tend to infinity.
\end{enumerate}

We thank Nikolai Saveliev for useful discussions and Vladimir
Tarkaev for writing a computer program. The main results of the
paper have been obtained during the stay of both authors at MPIM
Bonn. We thank  the institute for hospitality, creative atmosphere,
and support.

\section{Arrow diagrams}
Let $A$ be an oriented 3-valent graph whose edges are divided into
two classes: {\em fat } and {\em thin}. The union of all vertices
and all fat edges of $A$ is called a {\em skeleton} of $A$.

\begin{definition} \label{defarrowd} $A$ is called an {\em arrow
diagram}, if  the skeleton of $A$ consists of disjoint circles.
Thin edges are called {\em arrows}.
\end{definition}

It follows from the definition that each arrow connects two
vertices, which may lie in the same circle or in different
circles. By a {\em based} arrow diagram we mean an arrow  diagram
with a marked point in the interior of  one of its fat edges. See
Fig.~\ref{arrex} for simple examples of based and unbased arrow
diagrams.
\begin{figure}[ht]
\centerline{\includegraphics[height=2.2cm]{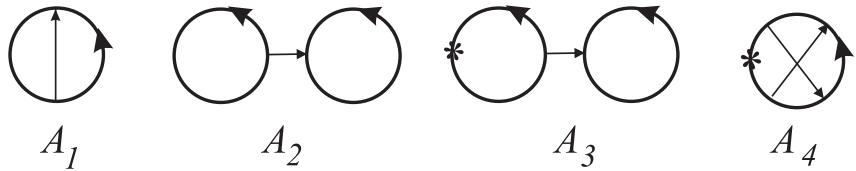}}
\caption{Simple examples of  arrow diagrams. }
  \label{arrex}
\end{figure}

\section{Gauss   diagrams}
  Recall that an $n$-component link diagram
is a generic immersion of the disjoint union of $n\geq 1$ oriented
circles to plane, equipped with the additional information on
overpasses and underpasses at double points. Any link diagram can be
presented numerically by its Gauss code, which consists of several
strings of signed integers. The strings are obtained by numbering
double points and traversing the components. Each time when we pass
a double point number $k$, we write $ k$ if we are on the upper
strand and   $-k$  if on the lower one. If we prefer to distinguish
knots and their mirror images or if we are considering a link with
$\geq 2$ components, then an additional string of $\varepsilon_i=\pm
1 $ called {\em chiral signs} is needed. Here $i$ runs over all
double points $a_i$ of the diagram and  signs are determined by the
right hand grip rule. See Fig.~\ref{exampleMFr}.

\begin{figure}[ht]
\centerline{\includegraphics[height=4cm]{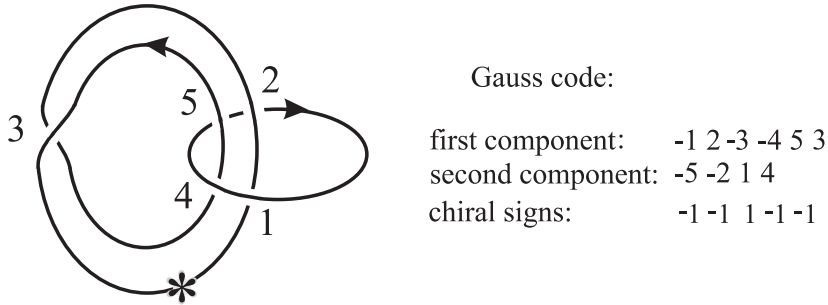}}
 \caption{A 2-component link and its Gauss code}
 \label{exampleMFr}
\end{figure}

A convenient way to visualize a Gauss code is a {\em Gauss diagram}
consisting of the oriented link components with the preimages of
each double point connected with an arrow from the upper point to
the lower one. Each arrow $c$ is equipped with the chiral sign   of
the corresponding double point. The numbering of endpoints of arrows
is not necessary anymore, see Fig.~\ref{Gaussd}. We say that a link
diagram is {\em based} if a non-double point in one of its
components is chosen. An equivalent way of saying this consists in
considering {\em long links} in $R^3$, when the base point is placed
in infinity. If the link is based, then the corresponding Gauss
diagram is based too. Note that forgetting signs converts any Gauss
diagram into an arrow diagram, but not any arrow diagram (for
example, a fat circle with two thin oriented diameters) can be
realized by a Gauss diagram of a link. However, that is possible,
if we allow virtual links. This is actually the main idea of the
virtualization.

\begin{figure}[ht]
\centerline{\includegraphics[height=2.8cm]{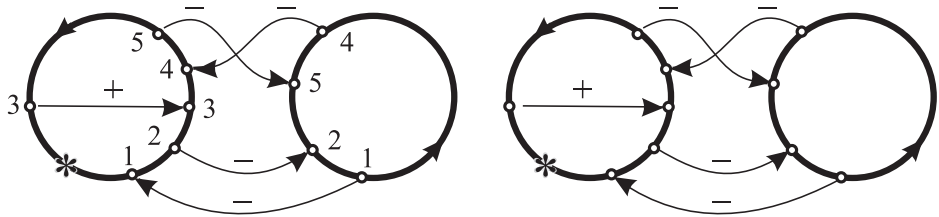}}
 \caption{ Two presentations of the  Gauss diagram  for the link
  diagram in Fig.~\ref{exampleMFr} }
 \label{Gaussd}
\end{figure}

\section{Arrow diagrams as functionals on Gauss diagrams}
As described in~\cite{PoVi1,GuPoVi}, any arrow diagram $A$ defines an
integer-valued function $\left  < A,* \right >$   on set of all
Gauss diagrams. Let $A$  be an $n$-component arrow diagram and $G$
be an $n$-component Gauss diagram. By a {\em representation} of $A$
in $G$ we mean an embedding of $A$ to $G$ which takes the circles
and arrows of $A$ to the circles, respectively,
     arrows of $G$ such that the orientations of all circles and
 arrows are preserved. If both diagrams are based, then
 representations must respect base points. For a given
 representation $\varphi \colon A\to G$
 we define its {\em sign}  by
$\varepsilon(\varphi)=\prod \varepsilon(\varphi(a))$, where the
product is taken over all arcs   $a\in A$.

\begin{definition} \label{functional} Let $A$ be an $n$-component arrow
diagram. Then for any  $n$-component arrow diagram $G$ we set
$\left <A,G\right >=\sum \varepsilon(\varphi) $, where the sum is
taken over all representations of $A$ in $G$.
\end{definition}

\begin{example} \rm  \label{lk}
Let us describe functions for arrow diagrams $A_1 -A_4$  shown in
Fig.~\ref{arrex}. Evidently, $A_1$ determines the writhe of the
link, which is defined as the sum of the chiral signs of all
double points. Let $G$ be a Gauss diagram of an oriented
2-component link $L=L_1\cup L_2$. Then $\left <A_2,G\right
>=2n$, where $n=\lk (L_1,L_2)$ is the linking number  of the
components. Indeed, for any arrow of $G$ we have exactly one
representation of $A_2$ to $G$. Therefore, all double points
contribute to $\left <A_3,G\right >$ (not only points where the
one preferred component is over the other).  If we insert a base
point into $A_2$ and a base point into $G$ (thus fixing ordering
of the two link component), we get $n$ without doubling. It
means that $\left <A_3,G\right >=\lk(L_1,L_2)$. The meaning of
$\left <A_4,G\right >$ is more complicated. If $G$ is a Gauss
diagram of a knot $K\subset S^3$, then $\left <A_4,G\right >$ is
the second coefficient $v_2$ of the Conway polynomial of $K$,
which is often called the Casson invariant of $K$. See~\cite{PoVi2}
for a diagrammatic description  and properties of $v_2$.
\end{example}

\begin{example} \rm  \label{exA1A2} For arrow diagrams $U_1, U_2$
shown in Fig.~\ref{arroval} and the Gauss diagram $G$ shown in
Fig.~\ref{Gaussd}, we have $\left <U_1,G\right
>=0$ and $\left <U_2,G\right >=1$.
 The image of the unique representation of $U_2$ to
$G$ contains arrows 2,3,4.
\end{example}
\begin{figure}[ht]
\centerline{\includegraphics[height=1.5cm]{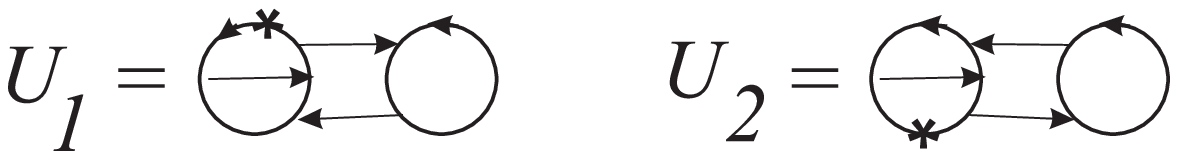}}
 \caption{
Two arrow diagrams}
 \label{arroval}
\end{figure}

\begin{remark} \rm \label{firstrem} Often it is convenient to extend
Definition~\ref{functional} by linearity to the free abelian group
generated by arrow diagrams. Let $A=\sum_{i=1}^mk_iA_i$ be a linear
combination of arrow diagrams. In general, the value $\left
<A,G\right >$ depends on the choice of the Gauss diagram $G$ of a
given link $L$ as well as on the choice of the base point. However,
for some carefully composed linear combinations of arrow diagrams
the result does not depend on the above choices. This gives a link
invariant $\left <A,G\right >$, which we will denote by $A(L)$.
It is easy to show that such an invariant is of finite type.
Moreover, any finite type invariant of long knots can be presented
in such a form. A similar result for links is unknown. See~\cite{GuPoVi}.
\end{remark}

\section{An arrow diagram invariant of degree three}

 Consider the  following linear combination $U=U_1+U_2+U_3+U_4$
of arrow diagrams (see Fig.~\ref{basicformula}).

\begin{figure}[ht]
\centerline{\includegraphics[height=1.8cm]{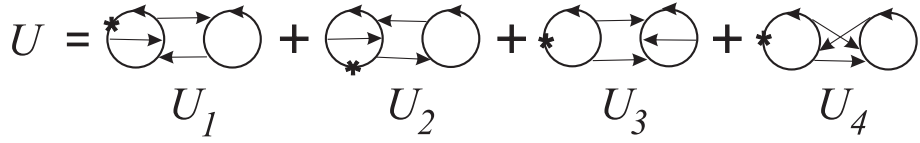}}
\caption{Remarkable linear combination of arrow diagrams}
 \label{basicformula}
\end{figure}

\begin{example} \rm For the link and Gauss diagrams shown in
 Fig.~\ref{exampleMFr} and Fig.~\ref{Gaussd} we have
 $\left <U,G\right > = 1-1=0$, since there are only two
 representations of  summands of $U$ in $G$: one representation
 of $U_2$ (see Example~\ref{exA1A2}) and one representation of
 $U_4$ with arrows 2,4,5 in its image.
\end{example}

\begin{example} \rm For the link and Gauss diagrams shown
 in Fig.~\ref{favor} we have $\left <U,G\right > = -1-1=-2$,
 since no summands of $U$ have representation in $G$ except
 the first one which has  two representations described by
 arrow triples (2,5,7) and (4,5,7).
\end{example}

\begin{figure}[ht]
\centerline{\includegraphics[height=4cm]{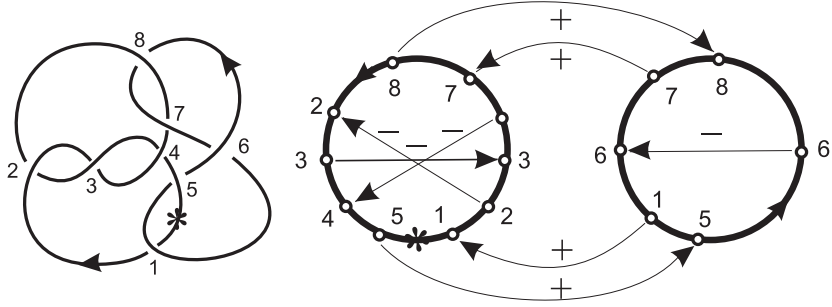}}
 \caption{Link $8_{11}^2$ in the Alexander-Briggs-Rolfsen
 table~\cite{rolf}.
  }
 \label{favor}
\end{figure}

Let $n$ be an integer and  $B$ an unknotted annular band having
$n$ negative full twists if $n\geq 0$ and $|n|$ positive full
twists if $n< 0$, see Fig.~\ref{ghl} for $n=5$. If we equip
the components of $\partial B$ by orientations induced by an
orientation of $B$, then their linking number is equal to $n$.

\begin{definition}\label{HopfLink} The 2-component link $\p B$
is called the {\em generalized Hopf link} and denoted $H(n)$.
Denote by $H(n,a,b)$ the generalized Hopf link
$H(n)$ with framings $a,b$ of its components.
\end{definition}

\begin{figure}[ht]
\centerline{\includegraphics[height=4cm]{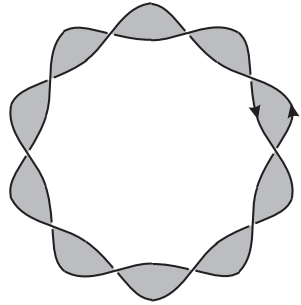}}
\caption{Generalized Hopf link $H(n)$ for $n=5$.}
 \label{ghl}
\end{figure}

The orientation of components of $\p B$ induced by an orientation
of $B$ is a part of definition of $H(n)$. If we reverse
orientation of one of them, we get a new oriented link
$\bar{H}(n)$ with the linking number of components $-n$. A diagram
of $H(n)$ and $\bar{H}(n)$ for $n=3$ are shown on top and bottom
of Fig.~\ref{ghl3}, respectively.

\begin{example} \rm \label{H(3)}
The diagrams of $H(n)$ and $\bar{H}(n)$ mentioned above differ
only by orientation of one component. Nevertheless, their Gauss
diagrams $G_n$ and $\bar{G}_n$ look quite different, see
Fig.~\ref{ghl3} for $n=3$. For $G_3$ we have $\left <U,G_3\right >
= 0$, since no summands of $U$ have representation in $G_3$. Of
course, the same fact holds for any $n$. For the bottom diagram
$\bar{G}_3$ we get $\left <U,\bar{G}_3\right >=-4$. Indeed, in
this case there are four representations of $U_4$  in $\bar{G}_3$.
They can be described by four triples of positive arrows
(1,2,3),(1,2,5), (1,4,5),(3,4,5) contained in their images. Since
$0=\left <U,G_3\right >\neq  \left <U,\bar{G}_3\right >=-4$, we
may conclude that the value of $\left <U,G\right >$ depends of
orientation of the components. Nevertheless, the following
proposition shows that aside this phenomenon $\left <U,G\right >$
is invariant.

\end{example}
\begin{figure}[ht]
\centerline{\includegraphics[height=7cm]{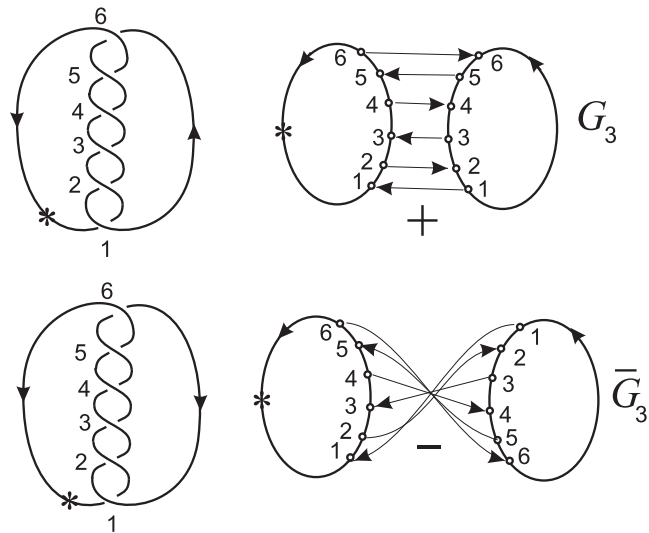}}
 \caption{Links $H(3)$ and $\bar{H}(3)$}
 \label{ghl3}
\end{figure}

\begin{proposition} \label{prop1} $U$ determines an invariant
$U(L)=\left <U,G\right >$ for oriented links of two ordered
components.
\end{proposition}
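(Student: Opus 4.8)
The plan is to show that $\langle U, G\rangle$ is unchanged under the Reidemeister moves applied to a diagram of $L$, where $L$ is given with a fixed ordering of its two components. Since $\langle U,G\rangle$ is manifestly independent of planar isotopy of the diagram, it suffices to check the three Reidemeister moves $R_1$, $R_2$, $R_3$. The idea of such a proof is classical in the Gauss diagram literature (cf.\ \cite{PoVi1,GuPoVi}): a Reidemeister move changes the Gauss diagram $G$ in a local, well-understood way (creating/deleting an isolated arrow for $R_1$, creating/deleting a cancelling pair of antiparallel arrows for $R_2$, permuting three mutually linked arrowheads/tails for $R_3$), and one must verify that the total count $\langle U,G\rangle$ is preserved. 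The key point is that $U=U_1+U_2+U_3+U_4$ has been assembled precisely so that the contributions from the new arrows cancel in pairs or vanish.

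First I would treat $R_1$: it creates an arrow $c$ with both endpoints adjacent on one component and no other arrow endpoint between them. I would argue that $c$ cannot be the image of any arrow of any $U_i$ whose other arrows are nonadjacent to it in the required way, so no representation of any $U_i$ uses $c$; hence $\langle U,G\rangle$ is unaffected. Next, $R_2$ introduces two arrows $c_1,c_2$ with the same sign pattern making them a cancelling pair. Here I would set up a sign-reversing involution on those representations of summands of $U$ whose image meets $\{c_1,c_2\}$: given such a representation, swap the roles of $c_1$ and $c_2$ (or toggle whether the relevant arrow of $U_i$ maps to $c_1$ or $c_2$). Because $c_1,c_2$ carry opposite chiral signs, this involution is sign-reversing, so these representations cancel in pairs and $\langle U,G\rangle$ is again unchanged. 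The combinatorial bookkeeping — listing which of $U_1,\dots,U_4$ can actually have an arrow land on $c_1$ or $c_2$ given the configuration of the two endpoints relative to the rest of $G$ — is the routine but slightly delicate part.

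The main obstacle is the $R_3$ move. Here three strands and three arrows $a,b,c$ are involved, their endpoints get shuffled, and one must check that the alternating sum of representations of the $U_i$'s using some subset of $\{a,b,c\}$ is invariant. The standard technique is to stratify the representations of each $U_i$ by how many of their arrows lie in $\{a,b,c\}$; those using none or all three are unaffected (or matched trivially), while those using exactly one or two must be matched across the move, possibly across different summands $U_i$ and $U_j$. This is exactly the reason $U$ is a specific linear combination of four diagrams rather than a single one: the "partial" representations of $U_1$ before the move must be paired with partial representations of $U_2$, $U_3$, or $U_4$ after the move. I would organize this by considering all the ways a triple of arrows in $G$ can interact with the three strands of the $R_3$ move and checking a finite list of local cases.

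Finally, having established invariance under all three Reidemeister moves, I would conclude — as the proposition asserts and as Example~\ref{H(3)} cautions — that $U(L)=\langle U,G\rangle$ is a well-defined invariant of the oriented link $L$ with its two components \emph{ordered} (the base point is not needed for $U$, unlike for $A_3$ in Example~\ref{lk}, but the ordering and the orientations of the components are genuinely used, as the computation $\langle U,G_3\rangle=0\neq -4=\langle U,\bar G_3\rangle$ shows). I expect the write-up to reduce, after the case analysis, to a short finite check that can be displayed in one or two figures.
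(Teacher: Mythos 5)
Your Reidemeister-move strategy is in substance the paper's own: the paper reduces invariance to checking that $\left<U,R\right>=0$ when $R$ runs over the relations of the Polyak algebra of \cite{GuPoVi}, and those relations are precisely the local Gauss-diagram transcriptions of $R_1,R_2,R_3$ that you propose to verify by hand (neither you nor the paper actually displays the finite case analysis). The genuine gap is your parenthetical claim that ``the base point is not needed for $U$.'' In this paper $\left<U,G\right>$ is a pairing of \emph{based} diagrams: the summands $U_1,\dots,U_4$ carry base points, $G$ is based on its first component, and by Definition~\ref{functional} representations must respect base points. This is not a removable decoration: it is used essentially later, e.g.\ in Lemma~\ref{step1}, where which summands admit significant representations depends on which component contains the base point, and in Lemma~\ref{skU}, where the argument hinges on placing the base point of $G^+$ just before $a^+(C)$ so that a significant representation is forced to send the first arrow $a_k$ of $U_k$ to $a^\pm(C)$ --- a deduction that only makes sense for base-point-respecting representations. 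Consequently $\left<U,G\right>$ depends a priori on the position of the base point (cf.\ Remark~\ref{firstrem}), and independence of that choice is part of what Proposition~\ref{prop1} asserts.

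As written, your outline therefore either proves invariance of a different quantity or is incomplete. If you discard the base points of the $U_i$, the unbased count equals the sum of the based counts over all cyclic placements of the base point, which is a different linear combination from the $U(L)$ used throughout the paper, and you would owe a separate identification of the two. If you keep the based pairing, then your $R_1,R_2,R_3$ check only covers moves performed away from the base point, and you must add the step the paper supplies: invariance under replacing the base point within the first component, obtained from the folklore equivalence between links and long links (two diagrams with a common base point are equivalent iff they are related by moves away from that point, and sliding the base point along a closed component is an isotopy of the associated long link). With that step restored --- and with the deferred finite bookkeeping for $R_2$ (both coherent and incoherent configurations) and $R_3$ actually carried out, including the verification that no $U_i$ has a self-arrow whose endpoints are adjacent, which is what your $R_1$ argument silently assumes --- your proof would coincide with the paper's.
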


\begin{proof} We will always assume that the base point is on
the first component. It suffices to show that $\left <U,G\right >$
is invariant with respect to
 \begin{enumerate}
 \item   Reidemeister moves far from the base point.
 \item Replacing the base point within the first component.
 \end{enumerate}

In order to verify the invariance under Reidemeister moves, it
suffices to show that $\left <U,R\right >=0$, where $R$ runs over
all relations of the Polyak algebra $P$ defined in~\cite{GuPoVi}.

Invariance of $\left <U,G\right >$ under replacing the base point
within the same component follows from the observation that two
link diagrams with the common base point in their first components
are Reidemeister equivalent if and only if they are equivalent via
Reidemeister moves performed far from the common base point. One
can give a ``folklore'' reformulation of that fact by saying that
the theory of links is equivalent to the theory of {\em long links}
(having in mind that the base point is at infinity).
\end{proof}

Once the invariance of $U(L)$ is established, one can identify it
with the {\em generalized Sato-Levine invariant}
$c_3(L)-(c_2(L_1)+c_2(L_2))c_1(L)$ (see \cite{AMR,AR,KiLi,Mel}),
where $c_k$ is the coefficient at $z^k$ of the Conway polynomial.
There are several ways to do that. In particular, one may show
that $\left <U,G\right >$ does not depend on the ordering of the
components of $L$. Then the identification follows from the fact
that it is an invariant of degree three for 2-component links and
such invariants are classified. So it suffices to check values of
$U(L)$ on a few simple links. Another way to identify $U(L)$ is to
check that it satisfies a simple skein relation under crossing
change of one of the components. Then its values on generalized
Hopf links determine the invariant.

We will follow the latter scheme of proof, since it will also be
the easiest way to relate the function $\F(L)$ from Definition
\ref{formula} below and $\lM$. Alternatively, instead of checking
values of $U(L)$ on generalized Hopf links, one may check that
$U(L)$ satisfies an appropriate skein relation under crossing
change of two different components (and vanishes on the unlink).
This idea will be used in Section \ref{sec:skein} to extract a
skein formula for the Casson-Walker invariant.

\section{The Formula}

Let $ {L}=L_1\cup L_2$ be an oriented  2-component framed link.
Denote by $a,b$, and $n$ the framings of $L_1,L_2$, and their
linking number, respectively. Suppose that the integer linking
matrix $\L=\left( \begin{array}{cc}a &n \\ n &b\end{array}\right)$
of $L$ has non-zero determinant $D=\mathrm{Det}(\L)$. The
signature of $\L$ will be denoted by $\sigma$. It is easy to see
that   $\sigma $   can be found by the following rule:

\[ \sigma = \left \{ \begin{array}{ll}
  0 & \mbox{if   $D<0$}\\
  2 & \mbox{if   $D>0$ and  $a+b>0$}\\
 -2 & \mbox{if   $D>0$ and  $a+b<0$}\\
 \end{array} \right.\]
We point out that if $D>0$, then $a,b$ have the same sign.
Therefore, in this case $\sigma$ is determined only by the sign,
say, of $a$.

Recall that $v_2(K)$ denotes the Casson invariant
$\left<A_4,G_K\right>$ of a knot $K\subset S^3$ (see
Fig.~\ref{arrex} for arrow diagram $A_4$). It coincides with the
coefficient at $z^2$ of the Conway polynomial of $K$. It can also
be extracted from the Alexander polynomial $\Delta_K (t)$
normalized so that $\Delta_K(t^{-1})=\Delta_K(t)$ and
$\Delta_K(1)=1$ as follows: $v_2(K)=\frac{1}{2}\Delta_K''(1)$.

We introduce a function $\F\colon{\cal L} \to \Q$, where $\cal L$
is the set of all oriented framed 2-component links, as follows.
Let $L=L_1\cup L_2\in {\cal L}$ .

\begin{definition}\label{formula}
$$\F(L)=av_2(L_2)+bv_2(L_1)-U(L)+\frac{1}{12}(n^3-n)+\frac{1}{24}
 (a+b)(2n^2-ab -2) $$
\end{definition}

\begin{example} \rm  \label{compF} Let us calculate $\F(L)$, where $L=H(n,a,b)$ is  the
generalized Hopf link $H(n)$ framed by $a,b$, see Definition~
\ref{HopfLink}. It is easy to see that $U(H(n))=0$ (see
Example~\ref{H(3)}, where we have shown that $U=0$ for $H(3)$).
Taking into account that $v_2(L_1)=v_2(L_2)=0$, we get
$$ \F(H(n,a,b))=\frac{1}{12}(n^3-n)
+\frac{1}{24}(a+b)(2n^2-ab-2)
$$
\end{example}

Let $M_L$ be the manifold obtained by surgery on $L$. Note that
our assumption  $D\ne 0$ means that the corresponding 3-manifold
$M_L$ is a rational homology sphere, i.e. the first homology group
of $M_L$ is finite. Its order $|H_1(M_L)|$ equals to $|D|$.

Let $\lM$ be the Casson-Walker invariant of $ M_{L}$, see
~\cite{Walk}. We normalize it following Walker, so as to have
$\frac{1}{2}\lambda_w(P_{120})=1$, where $P_{120}$ is the
positively oriented Poincar\'{e} homology sphere, obtained from
$S^3$ by surgery along the  trefoil with framing 1.

\begin{theorem}\label{main}(Main) For any oriented framed
2-component link $L=L_1\cup L_2$ we have
$${\frac{1}{2}D(\lM-\frac{1}{4}\sigma)=\F(L)}$$.
\end{theorem}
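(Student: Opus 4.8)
The plan is to show that both sides of the identity are invariants of the framed link $L$ obeying the same elementary rules under crossing changes, and that they agree on a family of links to which every framed 2-component link is connected by finitely many crossing changes, namely the framed generalized Hopf links $H(n,a,b)$ (note that $H(0,a,b)$ is the split framed unlink).

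First I would analyse the right-hand side $\F(L)$. Its only genuinely topological ingredients are $v_2(L_1)$, $v_2(L_2)$, and $U(L)$, since $a,b,n$ are entries of $\L$. Under a crossing change inside one component, say inside $L_1$, the matrix $\L$ is unchanged, $v_2(L_2)$ is unchanged, and, by the Conway skein relation, $v_2(L_1)$ changes by the linking number of the two circles $L_1',L_1''$ obtained by smoothing the crossing. For $U(L)$ I would use its identification with the generalized Sato--Levine invariant $c_3(L)-(c_2(L_1)+c_2(L_2))c_1(L)$ recorded after Proposition~\ref{prop1}, the Conway skein relation, and the standard expression for the $z^2$-coefficient of a 3-component link as the symmetric function of its pairwise linking numbers; a short computation yields the skein rule
\[
\F(L_+)-\F(L_-)=b\cdot\lk(L_1',L_1'')-\lk(L_1',L_2)\cdot\lk(L_1'',L_2),
\]
while an analogous, again purely Conway-polynomial, computation handles a crossing change between the two components, where now $n$ and $\L$ change and the smoothed object is a knot.

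The left-hand side is where the work lies. A crossing change of $L$ is realized by $\pm 1$-framed surgery on a small unknot $u$ linking the two relevant strands, so $M_{L'}$ is obtained from $M_L$ by surgery on $u$, and I would feed this into Walker's surgery formula for $\lambda_w$ from~\cite{Walk} (equivalently, into Lescop's formula~\cite{Lesc} for the change of $\lambda_w$ under adding a component to a surgery link). When the crossing lies inside a component, $u$ is null-homologous in $M_L$ and $\L$ is unchanged, so the formula expresses $\lambda_w(M_{L_+})-\lambda_w(M_{L_-})$ through the Alexander polynomial of $u$ in $M_L$ together with a framing term; computing that Alexander polynomial from the way the disk bounded by $u$ meets $L$ should produce exactly $\tfrac{2}{D}\bigl(b\cdot\lk(L_1',L_1'')-\lk(L_1',L_2)\lk(L_1'',L_2)\bigr)$, so that multiplication by $\tfrac{1}{2}D$ reproduces the rule above. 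When the crossing lies between the components, the same analysis applies after a Rolfsen twist converting $u$ into a modification of framings and linking number; here $D$ and $\sigma$ jump, and $-\tfrac{1}{4}\sigma$ is precisely the signature-defect correction (as in Lescop's formula) making $\tfrac{1}{2}D(\lambda_w-\tfrac{1}{4}\sigma)$ transform by the same combinatorial rule as $\F$. Finally, on $H(n,a,b)$ the manifold $M_L$ is a lens space, or a connected sum of two lens spaces when $D$ factors, so by additivity of $\lambda_w$ and Walker's closed formula for $\lambda_w$ of lens spaces in terms of Dedekind sums, together with $D=|ab-n^2|$ and the value of $\sigma$, the left-hand side reduces by Dedekind reciprocity to $\tfrac{1}{12}(n^3-n)+\tfrac{1}{24}(a+b)(2n^2-ab-2)$, which is the value of $\F(H(n,a,b))$ computed in Example~\ref{compF}. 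An induction on the number of crossing changes needed to reach such a Hopf link then propagates the equality to all framed 2-component links.

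The main obstacle is the left-hand side: extracting from Walker's surgery formula, applied to the inserted unknot, an expression whose terms are visibly the combinatorial quantities occurring in $\F$, while keeping exact track of every rational coefficient and of the signature bookkeeping that forces the $\tfrac{1}{4}\sigma$ term. The Hopf-link base case, which reduces to an identity between Dedekind sums and a cubic polynomial in $n,a,b$, is a secondary but still delicate point; the reduction step itself is harmless, since both sides are honest framed-link invariants and hence do not depend on the chosen sequence of crossing changes.
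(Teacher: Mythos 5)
Your reduction scheme is sound in outline (and matches the paper's: prove a crossing-change rule, then verify the formula on the framed generalized Hopf links $H(n,a,b)$, which every framed 2-component link reaches by self-crossings since these preserve $n,a,b$ and link homotopy is classified by the linking number). But your base case contains a genuine error: for $|n|\ge 2$ the manifold $M_{H(n,a,b)}$ is \emph{not} a lens space, nor a connected sum of lens spaces. Surgery on $H(n,a,b)$ produces a Seifert fibered space over $S^2$ with \emph{three} exceptional fibers, of types $(a+n,1),(b+n,1),(-n,1)$; for instance $M_{H(2,3,1)}$ is the (reversed) Poincar\'e homology sphere, which is irreducible and not a lens space. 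So additivity of $\lambda_w$ together with Walker's lens-space formula cannot compute the base case, and "when $D$ factors" has nothing to do with the manifold splitting as a connected sum (that happens only for $n=0$). To repair this you need a Casson--Walker (or Lescop) formula for Seifert manifolds with three exceptional fibers --- this is exactly what the paper does: it shows by Kirby moves that $H(n,a,b)$ is equivalent to a chain-type link $S(A,B,C)$ exhibiting the Seifert structure, then evaluates Lescop's Seifert formula, using the closed form $s(1,\ell)=\frac{1}{12\ell}(\ell-s(\ell))(\ell-2s(\ell))$ for the Dedekind sums and a separate signature lemma matching $s(A)+s(B)+s(C)-1$ with $\sigma$.

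Two further points. First, your treatment of the self-crossing rule for the left-hand side ("should produce exactly $\frac{2}{D}(b\,\ell-k(n-k))$") is precisely the content of Johannes's crossing-change formula, which the paper simply cites; re-deriving it from Walker's surgery formula is plausible but is the main analytic work and cannot be left at the level of an expectation. Second, the inter-component crossing-change discussion is unnecessary for the reduction (self-crossings alone reach $H(n,a,b)$ with the same framings and linking number), which is fortunate, because the assertion that $-\frac{1}{4}\sigma$ is "precisely the signature-defect correction" making the two sides jump identically is unsubstantiated as written --- in the paper the analogous statement (Theorem on the skein relation for admissible triples) is a \emph{consequence} of the main theorem, not an ingredient of its proof.
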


\begin{example} \rm  \label{complambda} Let us apply the main
theorem for calculation of the Casson-Walker invariant $\lambda_w$
for the manifold $M_{H(2,3,1)}$, where $ H(2,3,1)$ is the Hopf
link $H(2)$  framed by $a=3,b=1$ (see   Fig.~\ref{negaho2}).
\begin{figure}[ht]
\centerline{\includegraphics[height=3cm]{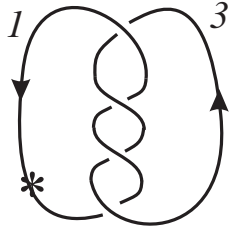}}
 \caption{Generalized  Hopf link $H(2)$ framed by $3,1$}
 \label{negaho2}
\end{figure}
It follows from Example~\ref{compF} that
$$\frac{1}{2}\l(M_{H(2,3,1)})=\frac{1}{24D}(12+(a+b)(2n^2-ab-2)
+\frac{1}{8}\sigma .$$ Substituting $D=3\cdot1-2^2=-1$ and $\sigma
=0$ we get $\frac{1}{2}\lambda_w(M_{H(2,3,1)})=-1$. This is not
surprising, since $M_{H(2,3,1)}$ is the negatively oriented
Poincar\'e homology sphere $-P_{120}$. Indeed, one generalized
destabilization move (sometimes called {\em blow-down}) transforms
$H(2,3,1)$ into the left trefoil framed by -1.
\end{example}

The plan of proving the above theorem consists of two
steps.\label{plan}

Step 1. We show that the correctness of equality
$\frac{1}{2}D(\lM-\frac{1}{4}\sigma )=\F(L)$ is preserved under
self-crossing of a link component.

Step 2. We show that the equality is true for the Hopf links
$H(n), n\neq 0$ framed by $a,b$. We do that by comparing our
formula for $\F(H(n,a,b))$ (see Example~\ref{compF}) with Lescop's
formula for the values of $\lambda_L$ for  Seifert manifolds (see
\cite{Lesc}, page 97).

Steps 1,2 imply Theorem~\ref{main}, since any 2-component link
can be transformed by self-crossings of its components into a
generalized Hopf link.

Let us pause to make few remarks on the role of the function $\F$.
\begin{remark}
Theorem \ref{main} implies that $\F(L)$ does not depend on the
orientation of the components and is preserved under handle
slides. It is not a pure 3-manifold invariant, since some
additional information from the surgery link is needed (namely,
the sign of $D$ and the signature $\sigma $). However, it may be
considered as a convenient renormalization of the Casson invariant
for technical purposes, since it is better suited for calculations
and has simpler skein properties.
\end{remark}
\begin{remark}
It turns out that $\F$ may be defined by a similar Gauss diagram
formula for $n$-component links with any $n=1,2,3,\dots$. The
general case $n\ge3$ will be discussed in a forthcoming paper. Let
us define it for $n=1$. For a framed knot $K$ with framing $a$ we
denote $$\F(K)=v_2(K)-\frac{1}{24}(a^2+2), \quad \text{where}\quad
v_2(K)=\left<A_4,G_K\right>$$ Since values of $\lambda_w$ for
manifolds obtained by surgery on a framed knot are well-known, it
is easy to check that similarly to Theorem \ref{main} one indeed
has
$$\displaystyle{\frac{1}{2}D(\lambda_w(M_K)-\frac{1}{4}\sigma(a))=\F(K),}$$
where $\sigma(a)$ is the signature of $(1\times1)$-matrix $(a)$,
i.e. the sign of $a$. This may be deduced also from Theorem
\ref{main}, by considering $K$ stabilized by $\pm1$-component. Now
we can rewrite $\F(L)$ for 2-component links using its values on
sublinks:
$$\F(L)=\F(L_1)b+\F(L_2)a-U(L)+\frac{1}{12}(n^3-n)+\frac{1}{12}(a+b)n^2$$
\end{remark}

\section{Behavior of  $\F$ with respect to self-crossings}
Let us carry out Step 1. Suppose that a diagram $G^-$ of an
oriented  framed 2-component link $L^-=K^-\cup S^-$ is obtained
from a diagram $G^+$ of a framed link $L^+=K^+\cup S^+$ by a
single crossing change at a double point $C$ of $K^+ $ such that
the chiral sign of $C$ is 1 in $K^+ $ and -1 in $K^-$.
  Note that  $K^+$ can be
considered as to consist of two loops ({\em lobes}) with endpoints
in $C$. Denote by $\ell$ the linking number of the lobes, by $k$
the linking number of one of the lobes with $S^+$, and by $n$ the
linking number of $K^+$ and $S^+$. Note that the crossing change
preserves the linking matrix $\L^+$ of $L^+$.

\begin{lemma}\label{step1}  In the situation above we have
$$\frac{1}{2}D(\lambda_w(M_{L^+})-\lambda_w
(M_{L^-}))= b\ell - k(n-k)=\F(L^+)-\F(L^-),$$
 where $D=\mathrm{Det}(\L^+)$ and $b$ is the framing of $S^+$.
\end{lemma}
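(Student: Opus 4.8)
The plan is to split the assertion into two independent equalities and handle them separately. The right-hand equality, $\F(L^+)-\F(L^-)=b\ell-k(n-k)$, is purely combinatorial and should follow by direct computation from Definition~\ref{formula}. Since the crossing change is a self-crossing of $K^+$, it preserves the linking matrix $\L^+$, hence $a,b,n,D$ and $\sigma$ are unchanged; the terms $\frac{1}{12}(n^3-n)$, $\frac{1}{24}(a+b)(2n^2-ab-2)$ and $av_2(L_2)$ cancel in the difference. What remains is $b(v_2(K^+)-v_2(K^-))-(U(L^+)-U(L^-))$. Here I would invoke the well-known skein behavior of $v_2$ under a crossing change, expressing $v_2(K^+)-v_2(K^-)$ as the linking number $\ell$ of the two lobes (this is the standard fact that $v_2$ of a knot changes by the linking number of the two loops formed at the changed crossing, cf.\ the diagrammatic description of $A_4$ in \cite{PoVi2}). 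For the difference $U(L^+)-U(L^-)$, I would analyze the arrow-diagram functional $\langle U,\cdot\rangle$ locally at the crossing $C$: a representation of a summand of $U$ either uses the arrow at $C$ or does not, and only those that use it contribute to the difference. A careful bookkeeping of which summands $U_1,\dots,U_4$ can place an arrow at $C$ — together with how the remaining arrows must interleave with $S^+$ and with the two lobes — should yield $U(L^+)-U(L^-)=k(n-k)-b\ell + (\text{something already accounted for})$, so that the combination produces exactly $b\ell-k(n-k)$. (One may instead derive this from the known generalized Sato--Levine skein relation, using the identification of $U(L)$ noted after Proposition~\ref{prop1}.)

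For the left-hand equality, $\frac{1}{2}D(\lambda_w(M_{L^+})-\lambda_w(M_{L^-}))=b\ell-k(n-k)$, the idea is to realize the crossing change as surgery on an auxiliary unknot. A crossing change at $C$ (a self-crossing of $K^+$) can be effected by adding a small $\pm1$-framed unknotted circle $O$ encircling the two strands of $K^+$ at $C$ and then blowing it down; thus $M_{L^+}$ and $M_{L^-}$ are both obtained by surgery on the three-component link $L^+\cup O$ (respectively $L^-\cup O$, which is isotopic), with $O$ framed $\mp1$. This puts the difference $\lambda_w(M_{L^+})-\lambda_w(M_{L^-})$ in a form controlled by a surgery formula for the Casson--Walker invariant under $\pm1$-surgery on a knot in a rational homology sphere. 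Equivalently — and this is the route I expect to be cleanest — one can use the known behavior of the Lescop invariant $\lambda_L=\frac{|H_1|}{2}\lambda_w$ under such a modification, which is a Casson-type surgery formula whose leading term involves $v_2$-like and linking-number data of the encircled strands relative to the rest of the link, i.e.\ precisely the quantities $\ell$, $k$, $n$ and the framing $b$ entering through the linking matrix.

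The main obstacle, and the step deserving the most care, is the left-hand equality: pinning down the exact surgery formula for $\lambda_w$ (or $\lambda_L$) under a $\pm1$-surgery on an unknot $O$ linking two strands of one component, and then extracting from it the precise quadratic expression $b\ell-k(n-k)$ divided by $\frac12 D$. One must correctly track the normalization of $\lambda_w$ (the Walker normalization fixed above via $P_{120}$), the change in $|H_1|$ (here there is none, since the linking matrix of $L^\pm$ is preserved and $O$ is blown down), and the contribution of the linking data of $O$ with the lobes of $K^+$ and with $S^+$. Concretely, the linking number of $O$ with $K^+$ is $0$ but its linking with the two lobes separately is $\pm1$ each, and its linking with $S^+$ is $0$ as well; the nontrivial input is how $O$ "sees" the splitting of $K^+$ into lobes with mutual linking $\ell$ and individual linkings $k$ and $n-k$ with $S^+$. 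Once the surgery formula is in hand, matching it to $b\ell-k(n-k)$ is a finite computation using Cramer's rule for the inverse of $\L^+$ (note $b=\L^+_{11}$ and $D=\det\L^+$, so $b/D$ is the $(2,2)$-entry of $(\L^+)^{-1}$, which is the natural coefficient appearing in quadratic surgery formulas). I would organize the final write-up so that the combinatorial equality is dispatched first in a short paragraph, and the bulk of the argument is devoted to the surgery-formula computation for $\lambda_w$.
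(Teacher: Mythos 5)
Your treatment of the right-hand equality follows the same route as the paper (cancel all linking-matrix terms, use $v_2(K^+)-v_2(K^-)=\ell$, and count only representations of $U$ whose image contains the arrow at $C$), but it is left at the level of ``should yield'', and your intermediate guess is off: the correct statement is $U(L^+)-U(L^-)=k(n-k)$ exactly, with the $b\ell$ coming solely from the $v_2$ term. The paper pins this down by a two-case analysis according to whether the base point lies on $K^+$ or on $S^+$: in either case a significant representation (necessarily of $U_1,U_2$ in the first case, of $U_3$ in the second) is determined by two arrows joining the lobes $P,\bar P$ to $S^\pm$, and summing signs gives $k(n-k)$ via the Gauss-diagram description of linking numbers from Example~\ref{lk}. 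Also, your fallback of deducing this from the generalized Sato--Levine identification ``noted after Proposition~\ref{prop1}'' would be circular inside this paper: that identification (Corollary~\ref{basepoint1}) is itself proved using the present lemma.

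The genuine gap is the left-hand equality $\frac{1}{2}D(\lambda_w(M_{L^+})-\lambda_w(M_{L^-}))=b\ell-k(n-k)$. You reduce it to ``the known behavior of $\lambda_w$ (or $\lambda_L$) under $\pm1$-surgery on an unknot $O$ encircling the two strands'', but you never identify or apply such a formula, and this is where all the content lies. Note that $\lk(O,K^\pm)=\lk(O,S^\pm)=0$, so the linking matrix of $L^\pm\cup O$ carries no trace of $\ell$, $k$ or $n-k$; extracting $b\ell-k(n-k)$ therefore requires Alexander/Conway-polynomial data of the sublinks (as in Lescop's global formula or Walker's surgery formula), not a ``finite computation using Cramer's rule'' on $(\L^+)^{-1}$. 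Carrying this out amounts to re-proving the crossing-change formula of Johannes, which is precisely how the paper handles this step: the first equality is quoted as the main result of \cite{Joha1}, specialized to 2-component links. As written, your argument for the left equality is a plan rather than a proof; either cite \cite{Joha1} (or Lescop's formula) explicitly, or perform the full surgery computation you have only gestured at, keeping track of the Walker normalization and of how $\ell$, $k$, $n$, $b$ enter through the polynomial (not merely linking-matrix) data.
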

\begin{proof} The first equality is a partial case (for 2-component links)
of the crossing change formula, which is the main result
of~\cite{Joha1}.

Let us prove the second one. Since the crossing change preserves
the linking matrix, we have $\F(L^+)-\F(L^-)=
bv_2(K^+)-bv_2(K^-)-\left <U,G^+\right
>+\left <U,G^-\right >$. Recall that the equality
$v_2(K^+)-v_2(K^-)= \ell$ is one of the main properties of $v_2$,
see~\cite{GuPoVi}. For the Arf-invariant $v_2$\ {\rm mod} 2 it was
known long ago, see~\cite{Mat}. It follows that
$bv_2(K^+)-bv_2(K^-)=b\ell$.

Let us compute  $\left <U,G^+\right
>-\left <U,G^-\right >$. Note that
the Gauss diagrams $G^\pm$ are almost identical. The only
difference between them is that the arrows $a^+(C), a^-(C)$
corresponding to $C$ have opposite orientations and signs. Let us
analyze representations of $U_k,1\leq k\leq 4,$  in $G^\pm$. We
call a representation $\varphi \colon U_k\to G^\pm$ {\em
significant}, if its image contains $a^\pm(C)$. Otherwise
$\varphi$ is {\em insignificant}.   Since $G^\pm$ are identical
outside $a^\pm(C)$, there is a natural bijection between
insignificant representations of $U_k$ to $G^+$ and $G^-$ such
that the signs of corresponding representations are equal. It
follows that insignificant representations do not contribute to
the difference $U(L^+)-U(L^-)$.

{\sc Case 1}. Assume that the base point of $L^+$ is in $K^+$.
Denote by $P$ the lobe of $K^+$ containing the base point. Let
$\bar P$ be the other lobe. Consider a significant representation
$\varphi  \colon U_k \to G^\pm$. Since $\varphi  $ is significant,
$a^+(C)$ is in its image and thus $k=1$ or $k=2$. Any such
representation is completely determined by two arrows of $G^\pm$
contained in the image of $\varphi$. One of them connects $P$ to
$S^\pm$, the other goes from $S^\pm$ to $\bar P$. It is easy to
see that the contribution of $\varphi $ to the difference
$U(L^+)-U(L^-)$ is equal to the product of the signs of those two
arrows.    Taking into account Example~\ref{lk} (containing Gauss
diagram description of various linking numbers) we may conclude
that $U(L^+)-U(L^-)=k(n-k)$.

{\sc Case 2}. Assume that the base point of $L^+$ is in $S^+$.
Since $C$ is in $K^+$, there are no significant representations of
$U_1,U_2,U_4$  in $G^\pm$. Moreover, any representation  $\varphi
\colon  U_3 \to G^\pm$  is completely determined by two arrows of
$G^\pm$ contained in the image of $\varphi^\pm $. One of them
connects $S^\pm$ to $P$, the other   $S^\pm$ to $\bar P$. As
above, the contribution of $\varphi $ to the difference
$U(L^+)-U(L^-)$ is equal to the product of the signs of those two
arrows. We may conclude that $U(L^+)-U(L^-)=k(n-k)$ also in this
case.
\end{proof}

\begin{example} \label{crossingex} Let $L^+=L_1^+\cup L_2^+$ be
the link shown in Fig.~\ref{lke} (all arrows have positive signs).
Suppose that $L^-=L_1^-\cup L_2^-$ is obtained from $L^+$ by a
crossing change at the double point $C$ having number 1. We assume
that the framings of $L_1^+, L_2^+$ are $a,b$. Note that $n=3$,
$k$ is either 1 or 2, and $\ell =1$. Taking into account that
$v_2(L_1^+)=1$  and $v_2(L_2^+)=0$, we get $\F(L^+)-\F(L^-)=b-2$.
Therefore,
$\frac{1}{2}D\lambda_w(M_{L^+})-\frac{1}{2}D\lambda_w(M_{L^-})=b-2$.
\end{example}
\begin{figure}[ht]
\centerline{\includegraphics[height=4cm]{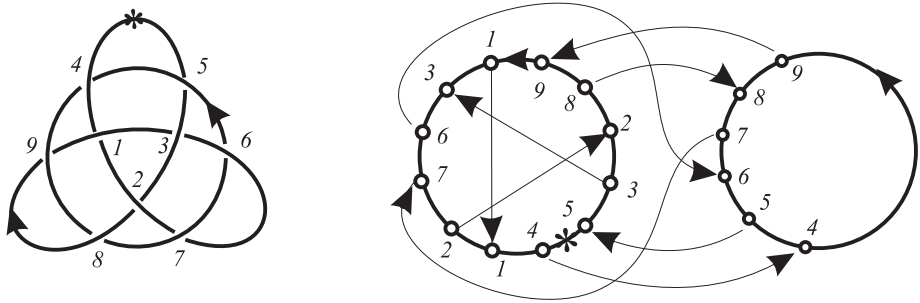}}
 \caption{There is only two representations of $U_k, 1\leq k \leq 4$ to $G^\pm$, namely
 a representation $U_1 \to G^+$ having sign 1 and a representation $U_2 \to G^-$
 having sign -1. They    covers arrows $(1,4,7)$ of $G^+$ and $(8,7,\bar 1)$ of
 $G^-$, where $\bar 1$ denotes the first arrow of $G^+$ with reversed orientation. The total contribution of $U$ to
 $\F(L^+)-\F(L^-)$ is 2}
 \label{lke}
\end{figure}

\begin{corollary} \label{basepoint1} Let $L=L_1\cup L_2$ be
an oriented 2-component link in $S^3$ and $U$ the linear
combination of arrow diagrams shown in Fig.~\ref{basicformula}.
Then the invariant $U(L)=\left <U,G\right >$ does not depend on
the ordering of the components of $L$. It coincides with the
generalized Sato-Levine invariant
$c_3(L)-(c_2(L_1)+c_2(L_2))c_1(L)$, where $c_k$ is the coefficient
at $z^k$ of the Conway polynomial.
\end{corollary}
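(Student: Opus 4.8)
The plan is to prove the two assertions of Corollary~\ref{basepoint1} in turn, using Lemma~\ref{step1} as the main tool together with the classification of low-degree link invariants.

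First I would establish that $U(L)$ is independent of the ordering of the two components. Here I would exploit the fact, already visible in Lemma~\ref{step1}, that the quantity $U(L^+)-U(L^-)$ under a self-crossing change of a component equals $k(n-k)$, an expression that is manifestly symmetric: it only involves linking numbers among the lobes and the other component, none of which remembers which component was labeled first. Combined with Proposition~\ref{prop1}, which tells us $U(L)$ is already a well-defined invariant of oriented links of two ordered components, this reduces the ordering-independence to checking it on a set of links from which every $2$-component link is reachable by self-crossings of its components --- namely the generalized Hopf links $H(n)$. On $H(n)$ one computes directly (as in Example~\ref{H(3)}) that $U(H(n))=0$ regardless of ordering, so the difference $U(L)-U(L^{\mathrm{op}})$, which is unchanged under self-crossings by the symmetric formula above, vanishes on all of $\cal L$.

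Next, to identify $U(L)$ with the generalized Sato-Levine invariant $W(L):=c_3(L)-(c_2(L_1)+c_2(L_2))c_1(L)$, I would follow the second scheme sketched in the text after Proposition~\ref{prop1}: show that $W$ obeys the same self-crossing skein relation and has the same values on generalized Hopf links. For the skein relation, recall $v_2(K^+)-v_2(K^-)=\ell$ and the standard Conway skein relation; a short computation with the Conway polynomial under a self-crossing of one component (keeping $c_1(L)=\lk(L_1,L_2)$ fixed, since that crossing change does not alter the linking number) should yield $W(L^+)-W(L^-)=k(n-k)$, matching the value of $U(L^+)-U(L^-)$ from Lemma~\ref{step1}'s proof. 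For the base case, on $H(n)$ the components are unknots so $c_2(L_i)=0$, and the Conway polynomial of $H(n)$ is classical, giving $c_3(H(n))=0=c_1(H(n))\cdot 0$, hence $W(H(n))=0=U(H(n))$. Since every $2$-component link is obtained from some $H(n)$ by self-crossings, agreement on the base case plus agreement of the self-crossing variation forces $U(L)=W(L)$ for all $L$; the degree-three finite-type classification of $2$-component link invariants could alternatively be invoked to make this step cleaner.

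The main obstacle I anticipate is the bookkeeping in the Conway-polynomial computation of $W(L^+)-W(L^-)$: one must carefully track how $c_3$ and the correction term $(c_2(L_1)+c_2(L_2))c_1$ change when a self-crossing is resolved, in particular verifying that the cross-terms organize exactly into $k(n-k)$ rather than some other symmetric quadratic in $k$ and $n$. A secondary subtlety is confirming that the class of generalized Hopf links is genuinely a complete set of ``seeds'' under self-crossings for both invariants simultaneously; this is the same reduction used in the proof plan for Theorem~\ref{main} (Step~1 plus Step~2), so it can be cited, but it should be stated explicitly to make the corollary self-contained.
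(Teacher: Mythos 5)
Your proposal is correct and follows essentially the same route as the paper: Lemma~\ref{step1} makes the self-crossing variation $k(n-k)$ ordering-independent, which reduces both claims to the generalized Hopf links $H(n)$, where $U$ and the generalized Sato-Levine invariant both vanish (and your Conway-skein computation giving $c_3(L^+)-c_3(L^-)=\ell n+k(n-k)$, hence $W(L^+)-W(L^-)=k(n-k)$, does come out as you anticipate). The only point to state explicitly is that $c_3(H(n))=0$ holds for the anti-parallel orientation the paper assigns to $H(n)$ (e.g.\ via Torres' formula, $\nabla_{H(n)}(z)=nz$), whereas the parallel-oriented version $\bar H(n)$ has $c_3\neq 0$ for $n\geq 2$, consistent with Example~\ref{H(3)}.
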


\begin{proof} Let us compare values of $U$ for two links which
differ only by ordering. By Lemma \ref{step1} the behavior of $U$
under self-crossings does not depend on the ordering. Thus it
suffices to compare values of $U$ on Hopf links $H(n)$ with two
different orderings; but Example \ref{H(3)} shows that it is $0$
in both cases. The last statement follows from the fact that the
generalized Sato-Levine invariant satisfies the same skein
relations and also vanishes on $H(n)$.
\end{proof}

\section{Model manifolds $Q(n,a,b)$}
\label{sectionGHL}

This section is dedicated to the study of the 3-manifold obtained
by surgery on $H(n,a,b)$. To that end, let us introduce another
framed link $S(A,B,C)$ shown in Fig.~\ref{linkS}.
It consists of four unknotted circles framed by $A=a+n,B=b+n,C=-n$,
and 0 such that each of the first three circles links the forth
circle framed be 0 exactly ones.

\begin{figure}[ht]
\centerline{\includegraphics[height=4.5cm]{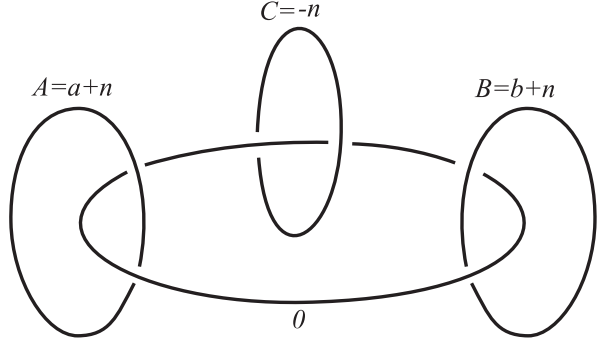}}
 \caption{Framed link presenting a Seifert manifold }
  \label{linkS}
\end{figure}

\begin{lemma}\label{Q=Q'} Manifolds obtained  by Dehn surgery of $S^3$
along  $H(n,a,b)$ and $S(A,B,C)$ are related by a  homeomorphism
which preserves orientations induced from the orientation of $S^3$.
\end{lemma}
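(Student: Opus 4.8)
The plan is to produce an explicit sequence of Kirby moves (handle slides, together with the standard operation of erasing a split sublink that represents $S^3$) taking the diagram $S(A,B,C)$ to a diagram of $H(n,a,b)$, and to check that it preserves the orientation induced from $S^3$. As a preliminary consistency check one computes that the $4\times4$ linking matrix of $S(A,B,C)$ has determinant $-(AB+BC+CA)=n^2-ab=-D$, so $|H_1|$ already matches $|D|=|H_1(M_{H(n,a,b)})|$.

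First I would use that in $S(A,B,C)$ the circles $X,Y,Z$ framed by $A=a+n$, $B=b+n$, $C=-n$ are meridians of the $0$-framed unknot $O$. Sliding $X$ over $Z$ and $Y$ over $Z$ along orientation-reversing bands makes them unlinked from $O$; the role of the choice $A=a+n$, $B=b+n$, $C=-n$ is precisely that the resulting framings become $A+C=a$ and $B+C=b$, while $X$ and $Y$ each acquire linking number $n$ with $Z$ (here $\lk(X,Z)=\lk(Y,Z)=0$ initially, so no framing corrections survive). Now $O$ is a $0$-framed meridian of the single $(-n)$-framed circle $Z$, so $(O,Z)$ is a Hopf sublink with linking matrix of determinant $-1$. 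Sliding $X$ and $Y$ repeatedly over $O$ — which leaves their framings and their mutual linking number untouched, since $\lk(X,O)=\lk(Y,O)=0$, and decreases each of $\lk(X,Z),\lk(Y,Z)$ by one per slide — unlinks them from $Z$ as well; then $O\cup Z$ is a split sublink representing $S^3$, and we erase it. What survives is two unknots with framings $a,b$ and linking number $-n$, i.e. a diagram of $\bar H(n,a,b)$. Since Dehn surgery does not see the orientations of the link components, this presents the same oriented manifold as $M_{H(n,a,b)}$ (cf. the remarks after Definition~\ref{HopfLink} on $H(n)$ versus $\bar H(n)$).

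Handle slides preserve the traced $4$-manifold, hence its oriented boundary, and erasing the Hopf sublink $O\cup Z$ (which contributes an $S^3$ summand) is orientation-preserving, so the resulting homeomorphism respects the orientation coming from $S^3$; a direct comparison of signatures ($\sigma$ of the $4\times4$ matrix of $S(A,B,C)$ equals $\sigma(\L)$, as one checks via leading principal minors) gives an independent confirmation. I expect the only genuine difficulty to be the bookkeeping: tracking every framing and every linking number through the slides so that the final two-component diagram lands exactly on $H(n)$ (up to the harmless reversal of one component) and not on another link with the same linking number, and making sure the orientation conventions match. If a less computational route is preferred, one can instead recognize $S(A,B,C)$ as the standard plumbing presentation of the Seifert fibred space over $S^2$ with three exceptional fibres of the stated orders, and identify $M_{H(n,a,b)}$ with it by Seifert-fibring the complement of the torus link $H(n)=T(2,2n)$ in $S^3$ and matching the Seifert invariants and Euler numbers of the two fillings, the Euler number pinning down the orientation.
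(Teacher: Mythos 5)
Your route is essentially the paper's own: its proof is precisely ``slide the $A$- and $B$-framed circles over the $C$-framed one, then remove the $0$-framed component together with the $C$-framed one'', and your first step coincides with that, with the correct bookkeeping ($A+C=a$, $B+C=b$, new linking number $-C=n$ with $Z$). The genuine gap is the one you flag but do not close: at the end you identify the surviving two-component link only through its linking matrix (``two unknots with framings $a,b$ and linking number $-n$''), and you assert that $O\cup Z$ becomes geometrically split on the strength of vanishing linking numbers. Neither suffices: a pair of unknots with linking number $-n$ need not be $\bar H(n)$ (the linking matrix does not see the clasping pattern), and algebraically trivial linking does not produce a splitting sphere. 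Since ``the diagram lands exactly on $H(n)$'' is the entire geometric content of the lemma, the determinant and signature computations are only consistency checks, and the proof as written stops short of the actual identification.

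Two remarks that would close it. First, your second round of slides (over $O$) is unnecessary and is what makes the final identification hard to see: once $O$ is a $0$-framed meridian of $Z$ linking nothing else, the pair $(O,Z)$ can be removed outright (slam dunk with coefficient $0$ turns $Z$ into an $\infty$-framed, hence erasable, component), even though the other two circles still link $Z$; this is exactly the paper's second move. If you delete the pair immediately after the first two slides, the residual link is visible: each slid component is a reversed $C$-framed push-off of $Z$ band-summed with a small circle around $O$, and that small loop retracts as soon as $O$ is erased, so what remains is two $C$-framed parallels of the unknot $Z$, i.e.\ the $(2,2C)$-torus link $\bar H(n)$ with framings $a,b$, which presents $M_{H(n,a,b)}$ since surgery ignores component orientations. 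Second, your alternative argument via Seifert fibrations is legitimate and close in spirit to how the paper uses the lemma (identifying $Q(n,a,b)$ with the Seifert manifold with exceptional data $(A,1),(B,1),(C,1)$), but as proposed it is also only a sketch: one must actually compute and match the Seifert invariants and Euler numbers, not just appeal to them.
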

\begin{proof} Adding the C-framed component to the components
framed by $A,B$ and then removing the 0-framed component together
with the $C$-framed one, one can easily show that $S(A,B,C)$ is
Kirby equivalent to $H(n,a,b)$.\end{proof}

Let us denote by $Q=Q(n,a,b)$ the manifold obtained from $S^3$ by
Dehn surgery along $H(n,a,b)$ or $S(A,B,C)$. Performing surgery of
$S^3$ along the 0-framed component of $S(A,B,C)$, we get
$S^2\times S^1$ such that the other three components are the
fibers of the natural fibration $S^2\times S^1 \to S^2$. It
follows that $Q$ is a Seifert manifold fibered over $S^2$ with
three exceptional fibers of types $(A,1),(B,1),(C,1)$ Here we
neglect the usual convention that the first parameters of
exceptional fibers must be positive. Our goal is to calculate
$\F(Q)$.

\begin{remark} As we have seen above, manifolds of the type $M_{H(n,a,b)}$
have a very simple structure: they are Seifert manifolds fibered
over $S^2$ with three exceptional fibers of  type $(p,\pm 1)$. The
values of $\F(M_{H(n,a,b)})$ can certainly be obtained using the
main Lescop formula (\cite{Lesc}, page 12,13), but we prefer  to
take into account the specific structure of $M_{H(n,a,b)}$ and use
simpler Lescop's formula (\cite{Lesc}, page 97).
\end{remark}

Note that the Euler number of $Q$ is
$\frac{1}{A}+\frac{1}{B}+\frac{1}{C}$ and the normalized
parameters of the exceptional fibers are $(|X|,s(X)$mod$|X|)$,
where $X=A,B,C$ and $s(X)$ is the sign of $X$. It turns out that
the signature $\sigma$ of $\L$ may be expressed via the signs of
$A,B,C$:

\begin{lemma}\label{tech}  For any numbers $A,B,C$ such that
$D=AB+AC+BC\neq 0$, $K=ABC\neq 0$, and $e=D/K>0$ we have
$\sigma=s(A)+s(B)+s(C)-1$, where $s(X)$ denotes the sign of
$X$ and $\sigma$ is the signature of the matrix
$\L=\left( \begin{array}{cc}a &n \\
n &b\end{array}\right)$ for $a=A+C, b=B+C, n=-C$.
\end{lemma}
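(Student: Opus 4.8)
The plan is a short and completely elementary case analysis on the signs of $A$, $B$, $C$ (these are well defined since $K\ne 0$), resting on two preliminary remarks. First I would record the two identities that make the rest mechanical: from $a=A+C$, $b=B+C$, $n=-C$ one gets by a one-line computation $D=\det\L=ab-n^2=(A+C)(B+C)-C^2=AB+AC+BC$ (matching the notation), and $e=D/K=\tfrac{1}{A}+\tfrac{1}{B}+\tfrac{1}{C}$; hence the hypothesis $e>0$ says exactly that $D=AB+AC+BC$ and $K=ABC$ carry the same sign. Second, I would recall the sign rule for the signature of the $2\times 2$ symmetric matrix $\L$, stated just before Definition~\ref{formula}: $\sigma=0$ when $D<0$, while $\sigma=2\,s(a)=2\,s(a+b)$ when $D>0$, since then $\L$ is definite and its diagonal entries share the sign of its trace.

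Next I would run through the possible sign patterns of $(A,B,C)$, using the symmetry $A\leftrightarrow B$, which leaves $\L$, $D$, $K$ and both sides of the claimed identity unchanged. If $A,B,C>0$, then $K>0$, so $e>0$ holds automatically, $D>0$, and $a=A+C>0$, giving $\sigma=2=s(A)+s(B)+s(C)-1$. If exactly one of $A,B,C$ is negative, then $K<0$, so $e>0$ forces $D<0$ and hence $\sigma=0=s(A)+s(B)+s(C)-1$. If all three are negative, then each pairwise product is positive, so $D>0$ while $K<0$, giving $e<0$; this contradicts the hypothesis, so the pattern does not occur — consistently with $s(A)+s(B)+s(C)-1=-4$ not being an admissible value of $\sigma$.

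The one remaining pattern, ``exactly two negative,'' is the only place calling for a genuine (but very short) argument, and that is the mild obstacle I expect. Here $K>0$, so $e>0$ gives $D>0$ and thus $\sigma=\pm 2$; I must exclude $\sigma=+2$ by showing $a+b<0$. By the $A\leftrightarrow B$ symmetry it suffices to handle (i) $A>0$, $B,C<0$, and (ii) $C>0$, $A,B<0$. In (i), write $B=-\beta$, $C=-\gamma$ with $\beta,\gamma>0$; then $b=B+C<0$ trivially, and $D>0$ reads $\beta\gamma>A\beta+A\gamma$, so discarding the positive summand $A\beta$ gives $\gamma>A$, i.e.\ $a=A+C=A-\gamma<0$; hence $a+b<0$ and $\sigma=-2=s(A)+s(B)+s(C)-1$. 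In (ii), write $A=-\alpha$, $B=-\beta$, $C=\gamma$ with $\alpha,\beta,\gamma>0$; then $D>0$ reads $\alpha\beta>\alpha\gamma+\beta\gamma$, which forces both $\alpha>\gamma$ and $\beta>\gamma$, so $a=C+A=\gamma-\alpha<0$ and $b=C+B=\gamma-\beta<0$, again $\sigma=-2$. Collecting the cases completes the proof.
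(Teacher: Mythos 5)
Your proof is correct, and it is the same elementary idea as the paper's: a case analysis on the signs of $A,B,C$ driven by the rule $\sigma=0$ for $D<0$ and $\sigma=2s(a)=2s(b)$ for $D>0$ (the latter because $ab=D+n^2>0$ makes $\L$ definite). The organization differs slightly. The paper first argues that permutations of $A,B,C$ do not affect the statement — justified by observing, via Lemma~\ref{Q=Q'}, that $\L$ has the same signature as the $4\times4$ linking matrix of $S(A,B,C)$, which is manifestly symmetric in $A,B,C$ — normalizes to $A\geq B\geq C$, and then splits only on the sign of $K$; in the two-negatives case it reads off $\sigma=2s(b)=-2$ immediately from $b=B+C<0$ together with $s(a)=s(b)$. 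You instead use only the trivial $A\leftrightarrow B$ swap and verify the definiteness sign by explicit inequalities extracted from $D>0$, which makes the argument self-contained (no appeal to Lemma~\ref{Q=Q'}) at the cost of one extra subcase ($C>0$, $A,B<0$) that the paper's ordering rules out. One small slip in your subcase (i): discarding the summand $A\beta$ from $\beta\gamma>A\beta+A\gamma$ yields $\beta>A$; to conclude $\gamma>A$ (hence $a=A-\gamma<0$) you should discard $A\gamma$. This is harmless — both inequalities hold, and in any event $b=B+C<0$ plus $s(a)=s(b)$ already gives $\sigma=-2$ — but it is worth correcting.
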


\begin{proof}
One can extract from Lemma~\ref{Q=Q'} that the matrices $\L=\left( \begin{array}{cc}a &n \\
n &b\end{array}\right)$ and $\left( \begin{array}{cccc}
A &0&0&1 \\
0 &B&0&1 \\
0 &0&C&1 \\
1 &1&1&0
\end{array}\right)$ have the same signature. It follows that
 permutations of $A,B,C$ do not affect
the correctness of Lemma~\ref{tech}. So we may assume that $A\geq
B\geq C$.

Suppose that $K>0$. Then $D>0$ and  either $A\geq B\geq C>0$ and
$\sigma=2s(a)=2s(b)=2$ or $A>0, B<0, C<0$ and
$\sigma=2s(a)=2s(b)=-2$. In both cases we get the conclusion of the
lemma.

Now suppose that $K<0$. Since  $D<0$, we have  $\sigma=0$ and
exactly one negative number among  $A,B,C$. Therefore,
$s(A)+s(B)+s(C)=1+\sigma$.
\end{proof}

\vspace{0.5cm}

\section{Casson-Walker invariant for model manifolds}
Let $M$ be a rational homology sphere. Recall that the Lescop
invariant $\lambda_L(M)$ is related to the Casson-Walker invariant
$\lambda_w(M)$ by
$$\lambda_L(M)=\frac{1}{2}|H_1(M)|\lambda_w(M),$$
see~\cite{Lesc,Save2}. If $M$ is presented by an oriented framed
link $L$ with linking matrix $\L$, then we can rewrite that
formula as follows: $$s(D)
\lambda_L(M)=\frac{1}{2}D\lambda_w(M),$$ where $D$ is the
determinant of $\L$ and $s(D)$ is the sign of $D$. We will use
Lescop formula (\cite{Lesc}, page 97) for the Seifert manifold
$M=(S^2;(a_1,b_1),\dots, (a_m,b_m)(1,b))$ (in the original
notation $M=(Oo0| b;(a_k,b_k)_{k=1,\dots,m})$):
$$\lambda_L(M)=\left (\frac{sign(e)}{24}\left (2-m+\sum_{k=1}^m
\frac{1}{a^2_k}\right )+\frac{e|e|}{24} -\frac{e}{8}-\frac{|e|}{2} \sum_{k=1}^ms(b_k,a_k)\right)|\prod_{k=1}^{m}a_k|,$$
where $0<b_k<a_k$, $e=b+\sum_{k=1}^mb_k/a_k$ is the Euler number
of $M$, and $s(b_k,a_k)$ are the Dedekind sums.

Let us recall the definition and properties of $s(b,a)$. If $a,b$
are coprime integers, then $s(b,a)$ is defined by
$$s(b,a)=\sum_{k=1}^{|a|}\left ( \left (\frac{k}{a}\right )\right
) \left ( \left (\frac{kb}{a}\right )\right ),  $$ where

\[ \left (
\left (x \right )\right )= \left \{ \begin{array}{ll}
     x- [x]-\frac{1}{2}  & \mbox{if   $x\not \in \mathbb{Z}$}\\
0 & \mbox{if  $x\in \mathbb{Z}$}    \\
 \end{array} \right.\]
is the sawtooth function, see Fig.~\ref{saw}.

\begin{figure}[ht]
\centerline{\includegraphics[height=2.5cm]{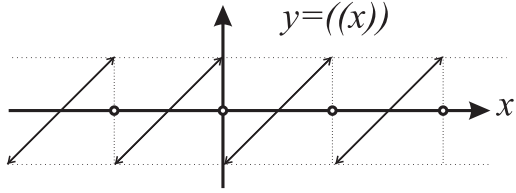}}
 \caption{The sawtooth function}
  \label{saw}
\end{figure}

 It follows from the definition that $s(b,a)$ possesses the
   property
   $s(b,a)=s(-b,-a)=-s(-b,a)=-s(b,-a)=s(b\pm a,a)$. In particular,
   $s(b,a)$ only depends on $b$ mod $a$ and $a$.

\begin{proposition}\label{lambda} Let $Q=Q(n,a,b)$ be obtained by
surgery of $S^3$ along the framed generalized Hopf link
$H(n,a,b)$. Then $\frac{1}{2}D(\lambda_w(M)- \frac{1}{4}\sigma
)=\F(H(n,a,b))$, where $\F(H(n,a,b))$ is given by
Example~\ref{compF}.
\end{proposition}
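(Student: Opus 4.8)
The plan is to reduce Proposition~\ref{lambda} to a finite, purely arithmetic identity by feeding the Seifert data of $Q(n,a,b)$ into Lescop's formula and comparing with the closed expression for $\F(H(n,a,b))$ from Example~\ref{compF}. First I would invoke Lemma~\ref{Q=Q'} to present $Q(n,a,b)$ as the Seifert manifold over $S^2$ with three exceptional fibers of types $(A,1),(B,1),(C,1)$, where $A=a+n$, $B=b+n$, $C=-n$, and with Euler number $e=\frac{1}{A}+\frac{1}{B}+\frac{1}{C}=D/K$, where $D=AB+AC+BC$ and $K=ABC$. Note $D=\mathrm{Det}(\L)$ and $e>0$ is exactly the rational homology sphere condition here (one checks $D/K>0$ holds in the relevant range; otherwise one replaces the orientation and tracks signs). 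I would then rewrite each fiber $(X,1)$ in Lescop's normalized form $(|X|,b_X)$ with $0<b_X<|X|$ and $b_X\equiv s(X)\bmod |X|$, as noted just before the statement, so that $b_X/|X|$ contributes $1/X$ to the Euler number, consistently with $e=\sum 1/X$.

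Next I would substitute $m=3$, $a_k=|X|$, $b/a$-terms summing to $e$, and $s(b_k,a_k)=s(s(X),|X|)=s(1,X)$ (using $s(b,a)=s(-b,-a)$ and $s(1,|X|)=s(-1,-|X|)$, together with the reciprocity/sign rules for Dedekind sums listed above) into Lescop's formula, and use $s(D)\lambda_L(M)=\tfrac12 D\lambda_w(M)$ to pass to $\tfrac12 D\lambda_w(Q)$. On the $\F$ side, I would take $\F(H(n,a,b))=\frac{1}{12}(n^3-n)+\frac{1}{24}(a+b)(2n^2-ab-2)$ from Example~\ref{compF} and re-express $a,b,n$ in terms of $A,B,C$ via $a=A+C$, $b=B+C$, $n=-C$, so that both sides become rational functions of $A,B,C$ (with the Dedekind sums $s(1,A),s(1,B),s(1,C)$ still present). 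Finally, I would use Lemma~\ref{tech}, which gives $\sigma=s(A)+s(B)+s(C)-1$, to handle the $\frac{1}{8}\sigma$ correction term, reducing everything to checking the identity
$$\frac{1}{2}D\,\lambda_w(Q)-\frac{1}{8}D\bigl(s(A)+s(B)+s(C)-1\bigr)=\F(H(n,a,b)).$$

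The main obstacle will be the Dedekind sums: the naive expansion of Lescop's formula contains the terms $-\frac{|e|}{2}\bigl(s(1,A)+s(1,B)+s(1,C)\bigr)\,|ABC|$, whereas $\F(H(n,a,b))$ is a polynomial with no Dedekind sums at all, so these contributions must cancel against the $\frac{1}{8}\sigma$ term and the sign factors. The key computational input is that $s(1,X)=\frac{|X|}{12}-\frac{|X|}{4}+\frac{1}{6|X|}+\text{(something)}$; more precisely, from the closed form $s(1,q)=\frac{(q-1)(q-2)}{12q}$ for $q>0$ one gets $s(1,X)=\frac{1}{12}\bigl(|X|-3+\frac{2}{|X|}\bigr)$, and combined with $|e|\cdot|ABC|=|D|$ and the signs tracked through $s(A),s(B),s(C)$ this makes the Dedekind contribution polynomial in $A,B,C$. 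After this substitution everything is a rational identity in $A,B,C$ that clears denominators to a polynomial identity, which I would verify directly; organizing this cancellation cleanly (and correctly matching Lescop's orientation and normalization conventions against the Walker normalization fixed via $P_{120}$) is the only real work.
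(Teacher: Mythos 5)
Your proposal follows essentially the same route as the paper's own proof: present $Q(n,a,b)$ as the Seifert manifold with exceptional fibers $(A,1),(B,1),(C,1)$ via Lemma~\ref{Q=Q'}, reduce to $e>0$ by the sign/orientation symmetry, substitute into Lescop's Seifert formula using the closed form of $s(1,\ell)$, convert $\lambda_L$ to $\tfrac12 D\lambda_w$, and dispose of the $\tfrac14\sigma$ term via Lemma~\ref{tech}, leaving a rational identity in $A,B,C$ to verify --- which is exactly the computation the paper organizes through its auxiliary quantities $K,P,S,\Sigma$. Two small cautions: $e>0$ is not the rational homology sphere condition (that is $D\neq0$; $e$ may be negative, and one really does use the mirror symmetry to reduce), and $s(1,X)=\frac{1}{12}\bigl(|X|-3+\frac{2}{|X|}\bigr)$ holds only for $X>0$, the sign flipping for $X<0$ as encoded in the paper's formula $s(1,\ell)=\frac{1}{12\ell}(\ell-s(\ell))(\ell-2s(\ell))$.
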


\begin{proof}
Recall from Section \ref{sectionGHL} that $Q$ is a Seifert
manifold fibered over $S^2$ with three exceptional fibers of
types $(A,1),(B,1),(C,1)$, where $A=a+n, B=b+n,C=-n$. The
Euler number of $Q$ is  $e=\frac{1}{A}+\frac{1}{B}+\frac{1}{C}$
and the normalized  parameters  of the exceptional fibers are
$(|X|,s(X) \textrm{ mod } |X|)$, where $X=A,B,C$ and $s(X)$
is the sign of $X$.

Note that reversing signs of $n,a,b $ (and hence signs of $A,B,C$
and $e$) does not affect the correctness of the conclusion of
the lemma. So we may restrict ourselves to the case $e>0$.

The proof is a cumbersome but straightforward comparison of
two expressions. Let us introduce the following notations.

\begin{enumerate}

\item $K=ABC$. Then $e=\frac{D}{K}$. Since $e>0$, $K$ and $D$ have
the same sign. We denote it by $s(D)$.

\item $P=A^2B^2+A^2C^2+B^2C^2$. Then $\frac{1}{A^2}+\frac{1}{B^2}+
\frac{1}{C^2}=\frac{P}{K^2}$.

\item
$S=AC(B-s(B))(B-2s(B))+AB(C-s(C))(C-2s(C))+BC(A-s(A))(A-2s(A))$.
In order to explain the meaning of $S$, we recall that the
Dedekind sums $s(1,\ell)$ can be calculated by the rule
$$s(1,\ell) = \frac{1}{12\ell}(\ell-s(\ell))(\ell-2s(\ell)) .$$
It follows that  $s(1,A)+s(1,B)+s(1,C)=\frac{S}{12K}$.

\item $\Sigma=A+B+C$.
\end{enumerate}

Using this notation and applying the above Lescop formula, we get

$$24\lambda_L(Q)=\varepsilon (-K+\frac{1}{K}(P+D^2-DS)-3D)$$

Simple calculation shows that $P-D^2=-2K\Sigma$ and
$2D-S=K(3(s(A)+s(B)+s(C)) -\Sigma)$. It follows that

$$24\lambda_L(Q)=
\varepsilon (-K-2\Sigma-D\Sigma+3D(s(A)+s(B)+s(C)-1)) $$ and,
since $\lambda_L(Q)=\frac{\varepsilon D}{2}\lambda_w(Q)$,

$$12D\lambda_w(Q)=
-K-2\Sigma-D\Sigma+3D(s(A)+s(B)+s(C)-1). $$

 Let us
now substitute $a=A+C, b=B+C, n=-C$ to the expression  for
$\F(H(n,a,b))$ (see Example~\ref{compF}). We get
$$24\F(H(n,a,b)) = -C^3+\Sigma C^2-\Sigma D-2\Sigma - CD$$

Let us show that $12D\lambda_w(Q)-3D\sigma-24\F(H(n,a,b))=0$.
Performing the substraction, we get
\begin{multline*}
12D \lambda_w(Q)-3D\sigma-24\F(H(n,a,b))=\\
-K+3D(s(A)+s(B)+s(C)-1-\sigma) +C^3-\Sigma C^2+CD
\end{multline*}
or, taking into account that $-K+C^3-\Sigma C^2+CD=0$,
$$12D \lambda_w(Q)-3D\sigma-24\F(H(n,a,b))= 3D(s(A)+s(B)+s(C)-1-\sigma).$$
It remains to note that $s(A)+s(B)+s(C)-1-\sigma=0$ by
Lemma~\ref{tech}.
\end{proof}

\vspace{0.5cm} {\em Proof of Main Theorem } (which states that
$\frac{1}{2}D(\lM-\frac{\sigma }{4})=\F(L)$, see
Theorem~\ref{main}). We have realized the plan of the proof
indicated in page~\pageref{plan}. Using Lemma~\ref{step1}, we
reduce the proof to the partial case of manifolds presented by
generalized framed Hopf links. Then we use
Proposition~\ref{lambda} for proving the theorem in this partial
case. $\blacksquare$

\section{Asymptotic behavior of the Casson-Walker invariant}
Let $ L=L_1\cup L_2$ be an oriented framed 2-component link. Then
$\lM$ depends on the underlying link and on the framing.
Theorem~\ref{main} allows us to understand the contribution of
those two ingredients. We use that for describing the asymptotic
behavior of $\lambda_w$ as the parameters of the framing tend to
$\infty$. For simplicity we restrict ourselves to the simplest
case when they have the form  $a=a_0t, b=b_0t$ and $t\to \infty$.

\begin{theorem}\label{linear} Let a 3-manifold $M_t$ be obtained
by surgery of $S^3$ along a framed link $L=L_1\cup L_2$ having
linking matrix $\L(t)=\left( \begin{array}{cc}a_0t &n \\
n &b_0t\end{array}\right)$. Then the following holds.

{\sc Case 1}. Suppose that $a_0+b_0\neq 0$ and $a_0b_0\neq0$. Then
$$\lambda_w(M_t)=-\frac{1}{12}(a_0+b_0)t+\frac{1}{4}\sigma+r(t),$$
where $\sigma $ is the signature of $\L(1)$ and $r(t)\to 0$ as
$t\to  \infty$.

 {\sc Case 2}. Suppose that $a_0+b_0=0 $ and $a_0b_0\neq0$. Then
$$\lambda_w(M_t)=2\frac{v_2(L_1)-v_2(L_2)}{a_0}t^{-1} +r(t),$$
where $r(t)t\to 0$ as $t\to \pm \infty$.

 {\sc Case 3}. Suppose that  $a_0b_0=0$. In order to be definite,
we assume that $b_0=0$. Then
 $$\lambda_w(M_t)=-\frac{a_0}{6n^2}(n^2-1+12v_2(L_2))t
-\frac{1}{6n^2}(n^3-n-12U(L))$$
\end{theorem}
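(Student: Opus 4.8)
The plan is to derive all three cases directly from the Main Theorem (Theorem~\ref{main}), which gives the closed formula
$$\lambda_w(M_t)=\frac{\sigma}{4}+\frac{2}{D}\F(L),\qquad D=\mathrm{Det}(\L(t))=a_0b_0t^2-n^2,$$
together with the explicit expression for $\F(L)$ from Definition~\ref{formula} (or its rewriting in terms of sublinks). The point is that, for a fixed underlying link $L_1\cup L_2$ (hence fixed $n$, $v_2(L_1)$, $v_2(L_2)$, $U(L)$), the function $\F(L)$ becomes an explicit polynomial in $t$ once we substitute $a=a_0t$, $b=b_0t$:
$$\F(L)=a_0t\,v_2(L_2)+b_0t\,v_2(L_1)-U(L)+\tfrac{1}{12}(n^3-n)+\tfrac{1}{24}(a_0+b_0)t\bigl(2n^2-a_0b_0t^2-2\bigr).$$
So the whole theorem reduces to an asymptotic expansion of the rational function $\tfrac{\sigma}{4}+\tfrac{2}{D}\F(L)$ as $t\to\infty$; there is essentially no topology left once Theorem~\ref{main} is in hand.

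For Case~1, I would note that when $a_0b_0\neq0$ the denominator $D=a_0b_0t^2-n^2$ grows like $t^2$, while the numerator $2\F(L)$ is a cubic in $t$ with leading term $-\tfrac{1}{12}(a_0+b_0)a_0b_0t^3$. Performing the polynomial division $\tfrac{2\F(L)}{D}$ gives $-\tfrac{1}{12}(a_0+b_0)t$ plus a remainder that is $O(t^{-1})$ (a bounded-degree polynomial over $D\sim t^2$), which is the claimed $r(t)\to0$; and the signature of $\L(t)$ stabilizes for large $t$ to that of $\L(1)$ (since $D$ keeps its sign and $a_0t+b_0t$ keeps its sign), accounting for the $\tfrac{\sigma}{4}$ term. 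Here one should be slightly careful to check that the constant term in the quotient vanishes, i.e.\ that the subleading behavior is genuinely $t^{-1}$ and not $t^0$; this follows from matching the degree-$1$ coefficients of numerator and denominator in the long division. For Case~2, I would set $b_0=-a_0$, so $a+b=0$ kills the entire $\tfrac{1}{24}(a+b)(\dots)$ term, leaving $\F(L)=a_0t\bigl(v_2(L_2)-v_2(L_1)\bigr)-U(L)+\tfrac{1}{12}(n^3-n)$, a polynomial of degree exactly $1$ in $t$; dividing by $D=-a_0^2t^2-n^2\sim -a_0^2t^2$ gives leading term $\tfrac{2a_0(v_2(L_2)-v_2(L_1))}{-a_0^2}t^{-1}=2\tfrac{v_2(L_1)-v_2(L_2)}{a_0}t^{-1}$, with the rest $O(t^{-2})$, i.e.\ $r(t)t\to0$; and since $D<0$ for all $t$ here, $\sigma=0$ and there is no constant term. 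For Case~3, I would set $b_0=0$, so $D=-n^2$ is constant (and negative, so $\sigma=0$), and then $\lambda_w(M_t)=-\tfrac{2}{n^2}\F(L)$ with $\F(L)=a_0t\,v_2(L_2)-U(L)+\tfrac{1}{12}(n^3-n)+\tfrac{1}{24}a_0t(2n^2-2)=a_0t\bigl(v_2(L_2)+\tfrac{1}{12}(n^2-1)\bigr)-U(L)+\tfrac{1}{12}(n^3-n)$; multiplying by $-\tfrac{2}{n^2}$ and collecting terms reproduces the stated exact (non-asymptotic) formula.

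The only genuine subtlety — and the step I would present most carefully — is the bookkeeping of the signature $\sigma$ of $\L(t)$ as a function of $t$: in Cases~2 and~3 it is identically $0$, but in Case~1 it is eventually constant and equal to $\sigma(\L(1))$ only for $|t|$ large, so the statement is genuinely asymptotic and one must invoke the sign rule for $\sigma$ recalled in Section~``The Formula'' to see that $\mathrm{sign}(D)$ and $\mathrm{sign}(a_0+b_0)$ stabilize. Everything else is the routine long division of the explicit polynomials above against $D(t)=a_0b_0t^2-n^2$, together with the observation that a ratio (polynomial of degree $\le k$)$/D(t)$ is $O(t^{k-2})$; I would state the three divisions and collect the remainder terms into $r(t)$ without grinding through every coefficient, since all of it is forced once the formula of Theorem~\ref{main} and Definition~\ref{formula} are substituted.
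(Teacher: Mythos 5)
Your proposal is correct and takes essentially the same route as the paper, whose proof is exactly this: substitute Theorem~\ref{main} and Definition~\ref{formula} with $a=a_0t$, $b=b_0t$, and expand the resulting rational function $\frac{\sigma}{4}+\frac{2\F(L)}{D(t)}$ in $t$ case by case (your long division and the vanishing of the constant term in Case~1 are just the details the paper leaves implicit). The only caveat, inherited from the statement itself rather than introduced by you, is that in Case~1 the constant is really the stabilized signature of $\L(t)$ for large $t$, which agrees with the signature of $\L(1)$ only when $D(1)$ already has its limiting sign (it differs when $0<a_0b_0<n^2$), so your parenthetical ``$D$ keeps its sign'' should be read as referring to large $t$.
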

\begin{proof} Follows easily from Theorem~\ref{main} and
Definition~\ref{formula}: we write down an explicit expression for
$\lambda_w(M_t)$ and investigate  its asymptotic behavior.
\end{proof}

Let us illustrate the behavior of $\lambda_w(M_t)$ graphically for
$a_0+b_0\neq 0$, $a_0b_0\neq0$, and $t\to \infty$. The right-hand
sides of the expression for $\lambda_w(M_t)$ (see the above proof)
and its approximation
$-\frac{1}{12}(a_0+b_0)t+\frac{1}{4}\sigma(t)$ make sense for all
(not necessarily integer) values of $t$. We show the graphs of
these functions for the generalized framed Hopf link
$H(n,a_0,b_0)$, where $n=2,a_0=3,b_0=2$ (Fig.~\ref{graph232}) and
$n=5,a_0=-3, b_0=2$ (Fig.~\ref{graph2-32}). Both graphs in
Fig.~\ref{graph232} have singularities at
$t=\frac{n^2}{\sqrt{a_0b_0}}\approx 0.8$ (because of the jump of
$\sigma(t)$).
\begin{figure}[ht]
\centerline{\includegraphics[height=7.0cm]{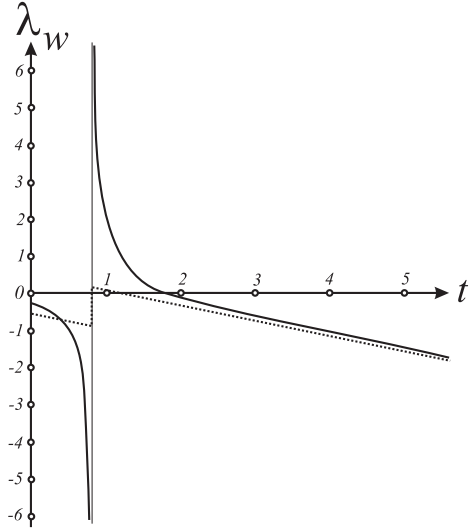}}
\caption{The behavior of  $\lambda_w(M_t)$ and its approximation
(dotted graph) for $H(2,3,2)$ }
 \label{graph232}
\end{figure}

\begin{figure}[ht]
\centerline{\includegraphics[height=7.0cm]{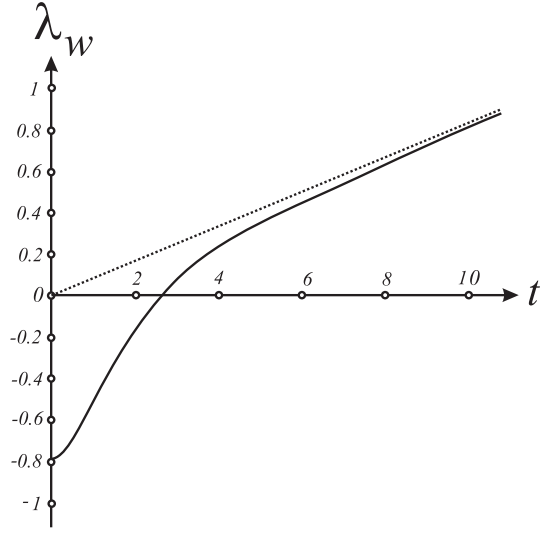}}
\caption{The behavior of  $\lambda_w(M_t)$ and its approximation
for $H(5,-3,2)$}
 \label{graph2-32}
\end{figure}

The following theorem shows the power series presentation of
$\lambda_w(M_t)$.
\begin{theorem}\label{appr} Let a 3-manifold $M(t)$ be obtained
by surgery of $S^3$ along a framed link $L=L(t)$ with linking matrix
$\L(t)=\left( \begin{array}{cc}a_0t &n \\
n &b_0t\end{array}\right)$. Suppose that $a_0b_0\neq 0$. Then for
$|t|>\frac{|n|}{\sqrt{|a_0b_0|}}$ we have
\begin{multline*}
\lambda_w(M_t)= \frac{1}{2}s(a_0b_0)-\frac{1}{12} (a_0+b_0)t+ \\
+\frac{1}{12}\sum_{k=0}^\infty \left (
(C_1-C_3(a_0+b_0))C_3^{k}t^{-(2k+1)}+C_2C_3^kt^{-(2k+2)}\right ),
\end{multline*}
 where $s(a_0b_0$ is the sign of $a_0b_0$, $C_1=\frac{(2n^2-2)(a_0+b_0)+24av_2(L_2)+24bv_2(L_1)}{ a_0b_0},$
 $C_2=\frac{-24U(L)+2n^3-2n}{a_0b_0}$, and $C_3=\frac{n^2}{a_0b_0}$.
\end{theorem}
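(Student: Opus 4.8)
The plan is to start from the closed-form expression for $\lambda_w(M_t)$ that the Main Theorem (Theorem~\ref{main}) together with Definition~\ref{formula} provide, and then expand it as a geometric series in $t^{-1}$. Explicitly, Theorem~\ref{main} gives
$$\lambda_w(M_t)=\frac{1}{4}\sigma(t)+\frac{2}{D(t)}\F(L(t)),$$
where $D(t)=\mathrm{Det}\,\L(t)=a_0b_0t^2-n^2$. First I would substitute $a=a_0t$, $b=b_0t$ into the formula of Definition~\ref{formula}, collecting the result as a polynomial in $t$: the $av_2(L_2)+bv_2(L_1)$ term is linear in $t$, the $-U(L)+\frac1{12}(n^3-n)$ term is constant, and the last term $\frac1{24}(a+b)(2n^2-ab-2)$ contributes a cubic $-\frac1{24}a_0b_0(a_0+b_0)t^3$ together with a linear piece. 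Thus $2\F(L(t))=-\tfrac1{12}a_0b_0(a_0+b_0)t^3+\alpha t+\beta$ for explicit $\alpha,\beta$ matching the stated $C_1,C_2,C_3$ after dividing by $a_0b_0$.

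Next I would perform the division by $D(t)=a_0b_0t^2-n^2=a_0b_0t^2(1-C_3t^{-2})$. The cubic term divided by $a_0b_0t^2$ gives exactly $-\tfrac1{12}(a_0+b_0)t$ with no remainder in the sense that the leftover from expanding $(1-C_3t^{-2})^{-1}$ feeds into the series; the linear and constant parts of $2\F$ divided by $a_0b_0t^2$ give $t^{-1}$ and $t^{-2}$ terms. Writing $(1-C_3t^{-2})^{-1}=\sum_{k\ge0}C_3^kt^{-2k}$ — valid precisely when $|C_3t^{-2}|<1$, i.e.\ $|t|>|n|/\sqrt{|a_0b_0|}$, which is the hypothesis — and multiplying out, one collects the coefficient of $t^{-(2k+1)}$ as $(C_1-C_3(a_0+b_0))C_3^k$ and of $t^{-(2k+2)}$ as $C_2C_3^k$. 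This is a routine but slightly delicate bookkeeping step: one must be careful that the $-\tfrac1{12}(a_0+b_0)t$ term is the $k=-1$-type contribution from the cubic and that its tail (from the higher terms of the geometric series) correctly combines with the contribution of $\alpha t$ to produce the $-C_3(a_0+b_0)$ correction inside the coefficient of $t^{-(2k+1)}$.

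It remains to identify the constant term $\frac12 s(a_0b_0)$. Under the hypothesis $|t|>|n|/\sqrt{|a_0b_0|}$ we have $D(t)>0$ exactly when $a_0b_0>0$; by the signature rule recalled before Definition~\ref{formula}, $\sigma(t)=0$ when $D(t)<0$ and $\sigma(t)=2\,s(a_0t)=2\,s(a_0)$ when $D(t)>0$, and in the latter case $a_0,b_0$ have the same sign so $s(a_0)=s(a_0b_0)$. Hence $\tfrac14\sigma(t)=\tfrac12 s(a_0b_0)$ when $a_0b_0>0$ and $\tfrac14\sigma(t)=0=\tfrac12 s(a_0b_0)$ when $a_0b_0<0$; in both cases $\tfrac14\sigma(t)=\tfrac12 s(a_0b_0)$, giving the stated leading constant. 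The only real obstacle is the careful term-matching in the geometric-series expansion — confirming that the explicit $C_1,C_2,C_3$ in the statement are exactly what emerges — but this is a direct computation with no conceptual difficulty, since the Main Theorem has already done all the topological work.
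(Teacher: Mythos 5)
Your main computation --- substituting $a=a_0t$, $b=b_0t$ into Definition~\ref{formula}, writing $D(t)=a_0b_0t^2-n^2=a_0b_0t^2(1-C_3t^{-2})$ and expanding $(1-C_3t^{-2})^{-1}=\sum_{k\ge0}C_3^kt^{-2k}$, valid exactly for $|t|>|n|/\sqrt{|a_0b_0|}$ --- is precisely the paper's proof, and your bookkeeping producing the coefficients $(C_1-C_3(a_0+b_0))C_3^k$ and $C_2C_3^k$ is correct.

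The step that does not hold as written is the identification of the constant term. Theorem~\ref{main} gives the constant $\frac14\sigma(\L(t))$, and in the region $|t|>|n|/\sqrt{|a_0b_0|}$ one has $\sigma=0$ if $a_0b_0<0$, and $\sigma=2\,s\bigl((a_0+b_0)t\bigr)=2\,s(a_0t)$ if $a_0b_0>0$. In your chain, the equality $s(a_0t)=s(a_0)$ requires $t>0$; the equality $s(a_0)=s(a_0b_0)$ fails when $a_0,b_0$ are both negative (then $s(a_0)=-1$ while $s(a_0b_0)=+1$); and in the case $a_0b_0<0$ you wrote $\frac14\sigma(t)=0=\frac12 s(a_0b_0)$, which is false since $s(a_0b_0)=-1$ there. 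In fact $\frac14\sigma(\L(t))=\frac12 s(a_0b_0)$ only when $a_0b_0>0$ and $(a_0+b_0)t>0$. A concrete check: for the two-component unlink with $a_0=1$, $b_0=-1$, $n=0$, Theorem~\ref{main} gives $\lambda_w(M_t)=0$ (here $\sigma=0$, $\F=0$), while the displayed formula gives $-\frac12$. This looseness is arguably present in the statement itself (whose constant should be read as $\frac14\sigma(\L(t))$, or which should be restricted to, say, $a_0,b_0>0$ and $t\to+\infty$); the paper's one-line proof is silent on the point, but your justification of the constant term rests on these false sign identities, so you should either restrict the sign regime of $(a_0,b_0,t)$ or replace $\frac12 s(a_0b_0)$ by $\frac14\sigma$ throughout.
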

\begin{proof} Follows form Theorem~\ref{main} and Definition~\ref{formula}.
The infinite series
in the   expression for $\lambda_w(M_t)$ arises after replacing
$D^{-1}=(a_0b_ot^2-n^2)^{-1}$ by
 $\frac{t^{-2}}{a_0b_0} \sum_{k=0}^\infty(\frac{n^2}{a_0b_0}t^{-2})^k$.
\end{proof}

\section{A skein-type relations for $U$ and $\lambda_w$}\label{sec:skein}
A usual skein relation involves diagrams of three oriented links
$L^+,L^-,L^0$. The diagrams are identical outside a small neighborhood
of one positive crossing $C$ of the diagram for $L^+$. The diagram of
the second link $L^-$ is obtained from the diagram of $L^+$ by a
crossing change at $C$ while the diagram of the oriented knot $L^0$
has no crossings at $C$.

We will consider the case when $L^+=L^+_1\cup L^+_2$,
$L^-=L^-_1\cup L^-_2$ are oriented 2-component links and $C$
is the crossing point of $L^+_1$ and $L^+_2$. Then $L^0$ is
a knot obtained by coherent fusion of  $L^+_1$ and $L_2^+$, see
Fig.~\ref{skein}. We shall refer to any triple $(L^+,L^-,L^0)$
of the above type as an {\em admissible  skein triple}.

\begin{figure}[ht]
\centerline{\includegraphics[height=3.0cm]{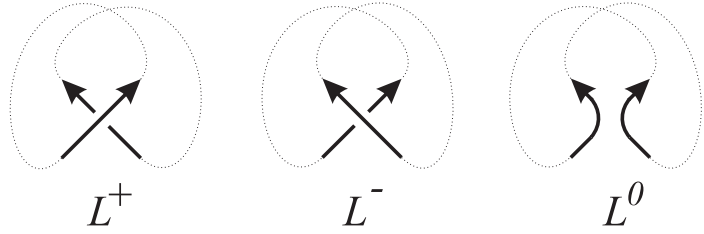}}
\caption{Three links participating in the skein-type relation}
 \label{skein}
\end{figure}

\begin{lemma}\label{skU} For any admissible skein triple
$(L^+,L^-,L^0)$ we have $U(L^+)-U(L^-)=v_2(L^0)-v_2(L_1)-v_2(L_2)$.
\end{lemma}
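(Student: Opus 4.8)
The plan is to compare representations of the four summands $U_1,\dots,U_4$ in a Gauss diagram $G^+$ of $L^+$ with representations in a Gauss diagram $G^0$ of the fused knot $L^0$, in the spirit of the bijection argument already used in the proof of Lemma \ref{step1}. First I would fix convenient diagrams: choose a diagram of $L^+$ which agrees with $L^0$ outside a small disk around $C$, so that $G^0$ is obtained from $G^+$ by deleting the single arrow $a(C)$ corresponding to $C$ and then re-interpreting the two strands through $C$ as a single circle (the skeleton changes from two circles to one, but all other arrows of $G^+$ survive, now as arrows of the knot diagram $G^0$). Similarly $G^-$ differs from $G^+$ only in that the arrow $a(C)$ is reversed and its sign flipped. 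As in Lemma \ref{step1}, representations of $U_k$ not using $a^\pm(C)$ are in sign-preserving bijection for $G^+$ and $G^-$, so $U(L^+)-U(L^-)$ is computed entirely from the \emph{significant} representations whose image contains $a^\pm(C)$.

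Next I would place the base point of $L^+$ on, say, $L^+_1$ near $C$, arranged so that after fusion it becomes a base point on $L^0$ near the smoothing site; this is legitimate by Proposition \ref{prop1} and its proof (the value does not depend on the base point within a component, and the fusion respects this). With the base point so placed, a significant representation of a summand of $U$ must carry one of the arrows of $U_k$ onto $a^\pm(C)$, which forces $k=1$ or $k=2$ (the diagrams $U_3,U_4$ have no arrow that, together with the base point configuration, can land on $a(C)$). Each such significant representation is determined by the two \emph{other} arrows in its image, and — exactly as in Case 1 of Lemma \ref{step1} — one checks that the contribution of each significant representation to $U(L^+)-U(L^-)$ equals the product of the signs of those two arrows, with the two arrows being precisely the arrows that, after the fusion, become a pair of arrows based at the smoothing site on the knot $L^0$. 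Summing these products I claim one obtains exactly $\langle A_4, G^0\rangle$ together with the correction terms: the pairs of arrows lying entirely on (the image of) $L^+_1$ contribute $v_2(L_1)$, those entirely on $L^+_2$ contribute $v_2(L_2)$, and the ``mixed'' pairs together with the arrows removed/reattached assemble into $v_2(L^0)$; equivalently, $U(L^+)-U(L^-)=v_2(L^0)-v_2(L_1)-v_2(L_2)$.

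The main obstacle will be the bookkeeping in this last step: one must verify carefully that, under the identification of arrows of $G^+$ with arrows of $G^0$, the significant representations of $U_1+U_2$ in $G^\pm$ are in sign-compatible correspondence with the representations of $A_4$ in $G^0$ that are ``localized at the fusion point'', and that the remaining representations of $A_4$ in $G^0$ — those not localized at the fusion point — are exactly those coming from $A_4$-representations lying within a single component, i.e. they account for $v_2(L_1)+v_2(L_2)$. This is a purely combinatorial matching of subdiagrams and involves checking the orientations and chiral signs are preserved by the fusion; I would organize it by the standard case analysis on which circles of $G^0$ the four endpoints of the two arrows lie on. A useful sanity check, which I would include, is to test the identity on the admissible skein triple where $L^+=H(1)$ is the Hopf link (so $L^-$ is the two-component unlink and $L^0$ the unknot): then $U(L^+)=U(L^-)=0$ and $v_2=0$ for all three, consistent with the claimed formula; and on a triple realizing the trefoil as $L^0$, where $v_2(L^0)=1$, one recovers the expected jump in $U$. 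Once the matching is established, no further computation is needed and the lemma follows.
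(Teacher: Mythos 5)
Your overall strategy (cancel insignificant representations, place the base point next to the crossing, and match the surviving significant representations with representations of $A_4$ in $G^0$, the ones confined to a single component accounting for $v_2(L_1)+v_2(L_2)$) is exactly the paper's, but the central case analysis is wrong, and it is not a cosmetic slip. You claim that a significant representation forces $k=1$ or $k=2$ and that $U_3,U_4$ admit no significant representations. That is the situation of Case 1 of Lemma \ref{step1}, where $C$ is a \emph{self}-crossing of the based component, so that $a^{\pm}(C)$ has both endpoints on the first circle. Here $C$ is a crossing \emph{between} $L^+_1$ and $L^+_2$, so $a^{\pm}(C)$ joins the two circles, and the conclusion is the opposite: with the base point placed just before the initial endpoint of $a^+(C)$, the first arrow $a_k$ of $U_k$ met from the base point must map onto $a^{\pm}(C)$, which excludes $U_1$ (its arrow $a_1$ has both endpoints on one circle), while $U_2$, $U_3$ and $U_4$ all do contribute significant representations in $G^+$. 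All three are needed: fusing $U_2,U_3,U_4$ along $a_k$ produces $A_4$ with the remaining two arrows distributed among the two circles in the three possible non-trivial ways, and only their union matches, bijectively and with equal values, the representations of $A_4$ in $G^0$ that are not contained in the Gauss diagram of a single component. Counting only $U_1,U_2$ as you propose misses the configurations coming from $U_3$ and $U_4$, and the resulting sum is not $v_2(L^0)-v_2(L_1)-v_2(L_2)$.

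A second point you gloss over is how the difference $U(L^+)-U(L^-)$ is actually computed: you import from Lemma \ref{step1} the statement that each significant representation contributes the product of the signs of its two other arrows, as if contributions from $G^+$ and $G^-$ were being combined as in that lemma. In the present situation the correct observation is that $G^-$ has \emph{no} significant representations at all, because $a^-(C)$ is directed from the second circle to the first while $a_k$ (for $k=2,3,4$) is directed from the first circle of $U_k$ to the second; hence the whole difference is computed from the significant representations in $G^+$ alone, each contributing its full sign (which equals the product of the two remaining signs since $\varepsilon(a^+(C))=+1$). Without these two corrections — the right list of contributing summands and the vanishing in $G^-$ — the bookkeeping you defer to "the main obstacle" cannot be carried out, so as written the argument does not establish the lemma.
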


\begin{proof} The statement may be deduced from the identification
of $U$ with the generalized Sato-Levine invariant, see Corollary
\ref{basepoint1}. For completeness we provide a simple direct
proof using only the definition of $U$. Denote by $G^+,G^-,G^0$
Gauss diagrams corresponding to the diagrams of $L^+,L^-,L^0$.
The Gauss diagrams are almost identical. The only difference
between $G^+$ and $G^-$ is that the arrows $a^+(C), a^-(C)$
corresponding to $C$ have opposite orientations and signs.
Knot diagram $G^0$ is obtained by coherent fusion of the circles
of $G^+$  along $a^+(C)$. Chose a base point in the first circle
of $G^+$ just before the initial point of $a^+(C)$. We may assume
that there are no endpoints of other arrows on small arcs containing
the endpoints of $a^+(C),a^-(C)$, and on arcs of $L^0$ obtained by
their fusion. See Fig.~\ref{fuse}, where those free-of-endpoints
arcs are shown dotted and the complementary arcs are numbered by
1,2. For reader's convenience we have also placed arrows diagrams
for $U$ and $v_2$.
\begin{figure}[ht]
\centerline{\includegraphics[height=3.6cm]{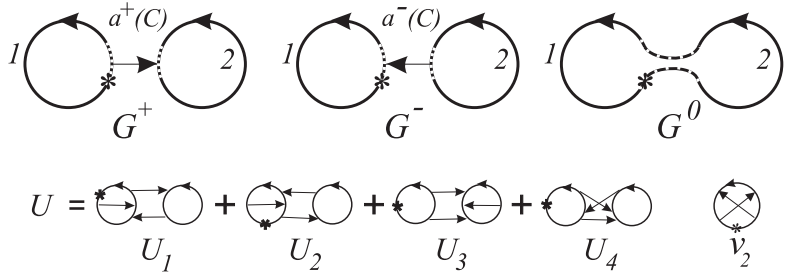}}
\caption{Skein triple of Gauss diagrams}
 \label{fuse}
\end{figure}

Let us analyze representations of $U_k,1\leq k\leq 4,$  in
$G^\pm$. As in the proof of Lemma~\ref{step1}, we call a
representation $\varphi \colon U_k\to G^\pm$  significant, if its
image contains $a^\pm(C)$. Insignificant representations  do not
contribute to the difference $U(L^+)-U(L^-)$.

 Let $\varphi$ be a significant representation of
$U_k, 1\leq k\leq 4,$ in $G^{\pm }$ and let $a_k\subset U_k$ be
the first arrow we meet travelling from the base point along the
first circle of $U_k$. Then the careful choice of the base point
for $G^{\pm}$ tells us that $\varphi$ takes $a_k$ to $a^\pm (C)$.
It follows that there is no significant representation of $U_1$ in
$G^{\pm}$ (since the endpoints of $a_1$ lie in the same circle of
$U_1$ while the endpoints of $a^\pm (C)$ lie in different circles
of $G^{\pm})$.

 By similar reason there are no significant
representations of $U_k, 2\leq k\leq 4$ in $G^-$. Indeed, $a_k$ is
directed from the first circle of $U_k$ to the second one while
$a^-(C)$ is directed from the second circle of $G^-$ to the first
one.

 Suppose that $\varphi$ is a significant
representation of $U_k, 2\leq k\leq 4$ in $G^+$.  Let us
coherently fuse the circles of $U_k$ along $a_k$. It is easy to
see that we get an arrow diagram $A_4$ (shown in Fig.~\ref{arrex}
and Fig.~\ref{fuse}) for calculation of $v_2$, together with the
corresponding representation $\varphi'\colon A_4\to G^0$. The
values of $\varphi$ and $\varphi'$ are equal. Vice versa, any
representation $A_4\to G^0$ such that arrows do not all land on
the image (under fusion) of the same circle of $G^+$, determines a
representation $U_k\to G^+$ having the same value. The only
representations $A_4\to G^0$ which have no corresponding
representation $U_k\to G^+$ are actually
 representations of $A_4$ in the Gauss diagram either of
 the first or the second circle of $G^+$. It follows that
$U(L^+)-U(L^-)=v_2(L^0)-v_2(L_1)-v_2(L_2)$.
\end{proof}

As a result we get the following behavior of $\lambda_w$ under a
crossing change involving two components:
\begin{theorem}\label{altlambda} For any admissible skein triple
$(L^+,L^-,L^0)$ we have
$$\F(L^+)-\F(L^-)=v_2(L_1)+v_2(L_2)-v_2(L^0)+\frac{1}{4}(n^2-n)+
\frac{1}{12}(a+b)(2n-1)$$ thus
\begin{multline*}
\frac{D^+}{2}\lambda_w(L^+)
-\frac{D^-}{2}\lambda_w(L^-)= \frac{D^+}{8}\sigma^+ -\frac{D^-}{8}\sigma^- +\\
+v_2(L_1)+v_2(L_2)-v_2(L^0)+\frac{1}{4}(n^2-n)+\frac{1}{12}(a+b)(2n-1)
\end{multline*}
\end{theorem}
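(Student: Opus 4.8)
The plan is to derive the two displayed identities in Theorem~\ref{altlambda} by the same bookkeeping we already set up for self-crossings, now adapted to a crossing change between the two different components. First I would fix notation: write $a,b$ for the framings of $L_1^\pm,L_2^\pm$ (which are unchanged by the crossing change), write $n^+=\lk(L_1^+,L_2^+)$ and note that after the crossing change at the intercomponent crossing $C$ the linking number drops by one, so $n^-=n^+-1$; accordingly the linking matrices $\L^\pm$, their determinants $D^\pm$ and signatures $\sigma^\pm$ genuinely differ, unlike in Lemma~\ref{step1}. Set $n=n^+$ throughout, so that $n$ is the linking number appearing on the right-hand side. The knot $L^0$ obtained by coherent fusion has framing $a+b+2n$ in the natural sense, but this will only enter through $v_2(L^0)$, which we treat as a black box supplied by Lemma~\ref{skU}.

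The first main step is the purely combinatorial identity for $\F$. From Definition~\ref{formula},
$$\F(L^+)-\F(L^-)=b\big(v_2(L_1^+)-v_2(L_1^-)\big)+a\big(v_2(L_2^+)-v_2(L_2^-)\big)-\big(U(L^+)-U(L^-)\big)+\tfrac1{12}\big((n^3-n)-((n-1)^3-(n-1))\big)+\tfrac1{24}(a+b)\big((2n^2-ab-2)-(2(n-1)^2-ab-2)\big).$$
The crossing change is between the two components, so neither $L_1$ nor $L_2$ is altered: $v_2(L_1^+)=v_2(L_1^-)$ and $v_2(L_2^+)=v_2(L_2^-)$, killing the first two terms. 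Then I substitute $U(L^+)-U(L^-)=v_2(L^0)-v_2(L_1)-v_2(L_2)$ from Lemma~\ref{skU}, and expand the two telescoped polynomial differences: $(n^3-n)-((n-1)^3-(n-1))=3n^2-3n=3(n^2-n)$, and $(2n^2-2)-(2(n-1)^2-2)=4n-2=2(2n-1)$. Collecting terms gives exactly $\F(L^+)-\F(L^-)=v_2(L_1)+v_2(L_2)-v_2(L^0)+\tfrac14(n^2-n)+\tfrac1{12}(a+b)(2n-1)$, which is the first display. This step is elementary algebra; the only thing to be careful about is consistently using $n=n^+$ and not confusing it with $n^-$.

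The second step converts this into the statement about $\lambda_w$. By Theorem~\ref{main} applied to $L^+$ and to $L^-$ separately — legitimate since $D^\pm\ne0$ is assumed in the admissible-skein-triple setup — we have $\F(L^\pm)=\tfrac12 D^\pm\big(\lambda_w(M_{L^\pm})-\tfrac14\sigma^\pm\big)$. Subtracting the two instances yields
$$\F(L^+)-\F(L^-)=\tfrac12 D^+\lambda_w(L^+)-\tfrac12 D^-\lambda_w(L^-)-\tfrac18 D^+\sigma^++\tfrac18 D^-\sigma^-,$$
and rearranging, then substituting the first display for the left-hand side, gives precisely the second (multiline) display. I would phrase this as a one-line consequence. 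I do not expect a serious obstacle here; the only subtlety worth a remark is that, in contrast to Lemma~\ref{step1}, the signature and determinant terms do \emph{not} cancel because the intercomponent crossing change changes the linking matrix, which is exactly why the statement carries the $\tfrac{D^+}{8}\sigma^+-\tfrac{D^-}{8}\sigma^-$ correction. If one wanted, one could further unwind $D^\pm=ab-n^\pm{}^2$ and the sign rule for $\sigma^\pm$ from Section~6 to make the $D^\pm,\sigma^\pm$ terms fully explicit in $a,b,n$, but that is cosmetic and I would leave it as is.
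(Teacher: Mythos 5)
Your proof is correct and follows essentially the same route as the paper, which simply notes that the theorem ``directly follows from Lemma~\ref{skU} and Theorem~\ref{main}''; you have merely filled in the straightforward bookkeeping (noting $n^-=n^+-1$, the invariance of $a,b,v_2(L_1),v_2(L_2)$, the telescoped polynomial differences, and the substitution of Lemma~\ref{skU} and Theorem~\ref{main} for $L^\pm$). No gaps.
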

\begin{proof}
Directly follows from Lemma~\ref{skU} and Theorem \ref{main}.
\end{proof}

\section{An Alternative Formula}

Consider the following linear combination
$U'=U_1+U_2+\frac{1}{2}(U_3+U_3')+\frac{1}{2}(U_4-U_4')$
of arrow diagrams, see Fig.~\ref{U'}.

\begin{figure}[ht]
\centerline{\includegraphics[height=2.7cm]{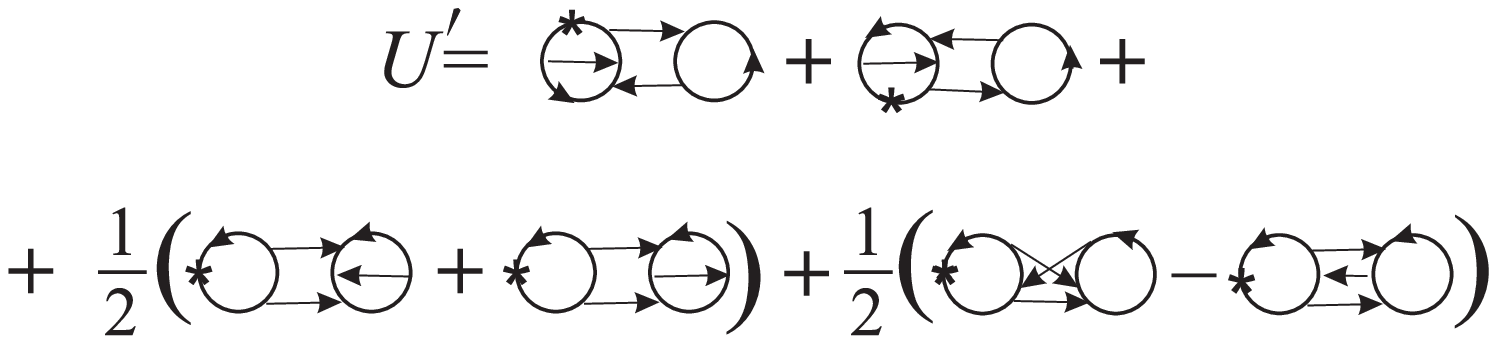}}
\caption{Another remarkable linear combination of arrow diagrams}
 \label{U'}
\end{figure}

\begin{lemma}\label{uandu'}
Let $L=L_1\cup L_2$ be an oriented 2-component link. Denote by
$L'$ the link $L_1\cup L'_2$ obtained from $L$ by reversing the
orientation of the second component. Let $G$ and $G'$ be their
based Gauss diagrams (assuming that the base points are in the
first components). Then $\left<U_1,G\right>=\left<U_1,G'\right>$,
$\left<U_2,G\right>=\left<U_2,G'\right>$, $\left<U_3,G\right>=
\left<U'_3,G'\right>$, $\left<U_4,G\right>=-\left<U'_4,G'\right>$,
and $\left<U',G\right>=\frac{1}{2}\left(\left<U,G\right>+
\left<U,G'\right>\right )$.
\end{lemma}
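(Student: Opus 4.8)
The plan is to analyze how the functional $\langle A,\,\cdot\,\rangle$ transforms under reversal of the orientation of the second component. First I would introduce the bijection $\rho\colon G\to G'$ of Gauss diagrams induced by reversing $L_2$: it is the identity on the underlying \emph{directed} chord diagram — the over/under relation at each crossing, and hence the direction (upper point to lower point) of each arrow, is unchanged — but it negates the chiral sign of every arrow joining the two components and preserves the chiral sign of every self-arrow of a single component. This is the one genuinely geometric step: one checks, via the right-hand grip rule, that reversing a single strand at a crossing flips its sign, so a crossing between $L_1$ and $L_2$ changes sign while a self-crossing of $L_2$ (where both strands are reversed) does not, and circle $1$ with its arrows is untouched. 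Since the base point lies on the first component, $\rho$ respects base points.

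Next, for an arrow diagram $A$ on two circles let $A^{\mathrm{rev}}$ denote $A$ redrawn with the orientation of its second circle reversed, and let $c(A)$ be the number of arrows of $A$ joining the two circles. Composing a based representation $\varphi\colon A\to G$ with $\rho$ produces a based representation $A^{\mathrm{rev}}\to G'$, and this sets up a bijection between the two representation sets; by the sign rule for $\rho$ the sign of a representation gets multiplied by $(-1)^{c(A)}$. Hence $\langle A^{\mathrm{rev}},G'\rangle=(-1)^{c(A)}\langle A,G\rangle$ for every such $A$, and, applying the same statement to $L'$ (whose second-component reversal returns $L$), also $\langle A,G'\rangle=(-1)^{c(A)}\langle A^{\mathrm{rev}},G\rangle$.

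Then I would read off from Fig.~\ref{U'} (together with Fig.~\ref{basicformula}) the three purely combinatorial facts: $U_1^{\mathrm{rev}}=U_1$ and $U_2^{\mathrm{rev}}=U_2$ with $c(U_1)=c(U_2)$ even; $U_3^{\mathrm{rev}}=U_3'$ with $c(U_3)$ even; and $U_4^{\mathrm{rev}}=U_4'$ with $c(U_4)$ odd. Feeding these into the identities above gives immediately $\langle U_1,G\rangle=\langle U_1,G'\rangle$, $\langle U_2,G\rangle=\langle U_2,G'\rangle$, $\langle U_3,G\rangle=\langle U_3',G'\rangle$, and $\langle U_4,G\rangle=-\langle U_4',G'\rangle$, which are the first four assertions; the symmetric version likewise yields $\langle U_3',G\rangle=\langle U_3,G'\rangle$ and $\langle U_4',G\rangle=-\langle U_4,G'\rangle$.

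The last assertion is then pure bookkeeping. Using $\langle U_1,G\rangle=\tfrac12(\langle U_1,G\rangle+\langle U_1,G'\rangle)$ and the same for $U_2$, together with $\langle U_3,G\rangle+\langle U_3',G\rangle=\langle U_3,G\rangle+\langle U_3,G'\rangle$ and $\langle U_4,G\rangle-\langle U_4',G\rangle=\langle U_4,G\rangle+\langle U_4,G'\rangle$, the definition of $U'$ gives $\langle U',G\rangle=\tfrac12\sum_{i=1}^{4}(\langle U_i,G\rangle+\langle U_i,G'\rangle)=\tfrac12(\langle U,G\rangle+\langle U,G'\rangle)$. The only real obstacle is the sign/orientation analysis in the first step — pinning down precisely which arrows change sign and confirming that arrow directions do not — and, relatedly, correctly matching $U_3^{\mathrm{rev}}$ and $U_4^{\mathrm{rev}}$ with the pictures of $U_3'$ and $U_4'$ and verifying the parities $c(U_i)$; once these are settled, the rest is formal.
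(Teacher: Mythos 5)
Your proposal is correct and follows essentially the same route as the paper: both proofs observe that reversing $L_2$ leaves the underlying diagram and all arrow directions intact while flipping exactly the signs of arrows joining the two circles (and reversing the second circle's orientation), then match representations bijectively and track the sign change by the parity of the number of joining arrows ($2$ for $U_1,U_2,U_3$, $3$ for $U_4$). Your explicit $\left<A^{\mathrm{rev}},G'\right>=(-1)^{c(A)}\left<A,G\right>$ formulation and the symmetric application needed for the final averaging identity are, if anything, a slightly more careful write-up of the same argument.
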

\begin{proof} Note that $G$ and $G'$ actually coincide. The only
difference is that their second circles have opposite orientations
and that arrows joining different circles have opposite signs.
First two equalities of the conclusion of the lemma are evident,
since any representation of $U_i, i=1,2,$ to $G$ determines a
representation of $U_i$ to $G'$, and vice-versa. The images of those
representations contains the same arrows. Since exactly two of these
arrows join differen components, the values of the representations
are equal. Similarly, any representation of $U_j,j=3,4$ determines
a representation of $U'_j$ to $G'$, and vice-versa. The values of
those representations are the same for $j=3$ and have opposite signs
for $j=4$. This is because  the number of arrows in the images of
representations is 2 for $j=3$ and 3 for $j=4$. Taking the sums,
we get $\left<U',G\right>=\frac{1}{2}\left(\left<U,G\right>+
\left<U,G'\right >\right)$.
\end{proof}

The following proposition and theorem are similar to
Proposition~\ref{prop1} and Theorem~\ref{main}.

\begin{proposition}\label{prop2}  $U'$ determines an invariant
$U'(L)=\left<U',G\right>$ for non-oriented links of two numbered
components.
\end{proposition}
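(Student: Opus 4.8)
The plan is to mirror the proof of Proposition~\ref{prop1} closely, reducing the claim to the two kinds of moves already used there: Reidemeister moves performed far from a base point placed on the first component, and relocations of the base point within the first component. For the second of these, the argument is verbatim that of Proposition~\ref{prop1} (long links are equivalent to based links), and it is unaffected by the fact that the second component is now unoriented. So the real content is invariance of $\langle U',G\rangle$ under Reidemeister moves, together with the new requirement that $\langle U',G\rangle$ not change when we reverse the orientation of the second component.

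First I would dispose of the orientation-reversal of the second component: this is exactly Lemma~\ref{uandu'}, which gives $\langle U',G\rangle=\frac12(\langle U,G\rangle+\langle U,G'\rangle)$ where $G'$ is $G$ with the second circle reoriented. Since $G$ and $G''$ (reversing the second component twice) coincide, applying the lemma to $G'$ in place of $G$ and using that $(G')'=G$ gives $\langle U',G'\rangle=\frac12(\langle U,G'\rangle+\langle U,G\rangle)=\langle U',G\rangle$, so $U'$ is manifestly orientation-independent on the second component once it is a well-defined invariant at all. Next, for invariance under Reidemeister moves far from the base point, I would again use the identity $\langle U',G\rangle=\frac12(\langle U,G\rangle+\langle U,G'\rangle)$: a Reidemeister move on $L$ (keeping both components oriented, base point on the first component) induces the corresponding Reidemeister move on $L'$, and by Proposition~\ref{prop1} both $\langle U,\cdot\rangle$ terms are invariant; hence so is their average $\langle U',\cdot\rangle$. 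The base-point relocation within the first component is handled identically, since it too commutes with reversing the orientation of the second component.

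The one subtlety I expect to be the main obstacle — though a minor one — is bookkeeping the orientation conventions: Lemma~\ref{uandu'} is stated for based Gauss diagrams with base point on the first component and with the second component reoriented, and one must check that this is precisely the orientation-reversal that the statement of Proposition~\ref{prop2} asks us to be invariant under, and that the half-integer coefficients in $U'$ (the $\frac12$'s in front of $U_3+U_3'$ and $U_4-U_4'$) are exactly what makes the averaging identity hold with no leftover terms. Both of these are settled by Lemma~\ref{uandu'} as stated, so the proof is essentially an assembly: combine Lemma~\ref{uandu'} with Proposition~\ref{prop1} to get Reidemeister- and base-point-invariance of the average, and read off orientation-independence of the second component from the symmetry of the averaging formula. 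I would therefore write the proof in three short paragraphs: (i) base point on $L_1$, reduction to Reidemeister moves away from it as in Proposition~\ref{prop1}; (ii) invoke $\langle U',G\rangle=\frac12(\langle U,G\rangle+\langle U,G'\rangle)$ and Proposition~\ref{prop1} to conclude invariance under those moves; (iii) observe that the same formula is symmetric in $G\leftrightarrow G'$, giving independence of the orientation of the second component.
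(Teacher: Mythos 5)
Your proposal is correct and follows essentially the same route as the paper: both rest on the identity $\left<U',G\right>=\frac{1}{2}\left(\left<U,G\right>+\left<U,G'\right>\right)$ from Lemma~\ref{uandu'}, deduce invariance of the average from Proposition~\ref{prop1}, and obtain independence of the orientation of the second component from the symmetry of that expression. The extra bookkeeping you add about Reidemeister moves and base-point relocation is just an unpacking of what Proposition~\ref{prop1} already provides.
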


\begin{proof} It follows from Lemma~\ref{uandu'} that
$\left <U',G\right > =\frac{1}{2}\left (\left <U ,G\right >+\left
<U,G'\right >\right )$.  Therefore, $U'(L)=\left <U',G\right > $ is
an invariant of oriented links by Proposition~\ref{prop1}. On the
other hand,  $\left <U',G\right >$ is invariant under reversing
orientation of one of its components, since  the above expression
for it is symmetric.
\end{proof}

\begin{theorem}\label{main'} For any non-oriented framed
2-component link $L=L_1\cup L_2$ we have
$\displaystyle{\frac{1}{2}D(\lM-\frac{1}{4}\sigma )=\F'(L)}$,
where $$\F'(L)=av_2(L_2)+bv_2(L_1)-U'(L)+\frac{1}{24}(a+b)(2n^2-ab
-2)$$
\end{theorem}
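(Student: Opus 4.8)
The plan is to derive Theorem~\ref{main'} from the already-proved Theorem~\ref{main} by symmetrizing over the two orientations of the second component, in exactly the way $U'$ was built from $U$ in Lemma~\ref{uandu'}.

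First I would set up the two oriented links involved: fixing an orientation of $L_2$, let $L=L_1\cup L_2$ and let $L'=L_1\cup L_2'$ be the link obtained by reversing that orientation, as in Lemma~\ref{uandu'}. The preliminary observations — all essentially immediate — are that reversing a component's orientation leaves the self-linking numbers $a,b$ unchanged and negates the linking number $n$; hence the linking matrices of $L$ and $L'$ share the same determinant $D$ and the same signature $\sigma$, the surgered manifolds coincide, $M_{L'}=M_L$ as oriented manifolds, and therefore $\lambda_w(M_{L'})=\lM$.

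Next I would apply Theorem~\ref{main} to each of $L$ and $L'$. Because the left-hand sides $\tfrac12 D(\lM-\tfrac14\sigma)$ are literally the same for the two links, this gives $\F(L)=\F(L')$, and averaging the two identities yields $\tfrac12 D(\lM-\tfrac14\sigma)=\tfrac12(\F(L)+\F(L'))$. It then remains to identify $\tfrac12(\F(L)+\F(L'))$ with $\F'(L)$, which is a short term-by-term check in Definition~\ref{formula}. The terms $av_2(L_2)+bv_2(L_1)$ (using that $v_2$ of a knot ignores orientation, so $v_2(L_2')=v_2(L_2)$) and $\tfrac{1}{24}(a+b)(2n^2-ab-2)$ (even in $n$) are invariant under the reversal and survive the average unchanged; the term $\tfrac{1}{12}(n^3-n)$ is odd in $n$, so it cancels in the average — this is precisely why $\F'$ carries no cubic term; and by Lemma~\ref{uandu'} the quantity $\tfrac12(U(L)+U(L'))$ equals $U'(L)$, a genuine invariant by Proposition~\ref{prop2}, so the $-U$ contributions average to $-U'(L)$. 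Collecting the survivors produces exactly the asserted formula for $\F'(L)$.

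I do not anticipate a real obstacle: all the substantive input is already contained in Theorem~\ref{main} and Lemma~\ref{uandu'}, and what remains is bookkeeping. The one point deserving a moment's care — the ``hard'' part only in this mild sense — is confirming that the surgery manifold and the pair $(D,\sigma)$ genuinely do not depend on the chosen orientation, since the whole argument hinges on the left-hand side of the Main Theorem being unchanged when we pass from $L$ to $L'$.
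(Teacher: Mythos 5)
Your argument is correct and is essentially the paper's own proof: both apply Theorem~\ref{main} to $L$ and to the link $L'$ with the second component reversed, note that $a,b,D,\sigma$, the $v_2$'s and the surgered manifold are unchanged while $n$ changes sign, and use Lemma~\ref{uandu'} to identify $\frac{1}{2}(U(L)+U(L'))$ with $U'(L)$, so that averaging $\F(L)$ and $\F(L')$ kills the odd term $\frac{1}{12}(n^3-n)$ and yields $\F'(L)$. No gaps to report.
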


\begin{proof}  Let us orient $L$ and denote by $L'$ the oriented
framed link obtained from $L$ by reversing orientation of $L_2$.
Note that the linking number  $n$ of the components of $L$ and the
linking number $n'$ of the components of $L'$ have the same
modules and different signs, that is, $n'=-n$. All other variables
in the expressions for $\F(L)$ and $\F(L')$ (see
Definition~\ref{formula}) except of $U(L),U(L')$ are the same. In
other words, $a,b,D,\sigma$ and both $v_2$ for $L$ coincide with
the corresponding variables for $L'$. It follows from
Lemma~\ref{uandu'} that $\F'(L)=\frac{1}{2}(\F(L)+\F(L')$. Since
$\F(L)=\frac{1}{2}D(\lM-\frac{1}{4}\sigma )=
\frac{1}{2}D(\lambda_w (M_{L'})-\frac{1}{4}\sigma )=\F(L')$ by
Theorem~\ref{main}, we get the conclusion.
\end{proof}

\section{Results of computer experiments}
The formula for $\lambda_w(M)$ from Theorem~\ref{main} is very
convenient for calculation. If a diagram of a link framed by
integers of reasonable size has about a dozen crossing points,
then the manual calculation takes a few minutes. A simple computer
program written by V. Tarkaev accepts Gauss codes and takes only
seconds for calculating $\lambda_w$ for framed links with
thousands crossings. We present here a few results of calculation
$\lambda_w$ for all rational homology 3-spheres which can be
presented by diagrams of 2-component (but not of 1-component)
links with $\leq 9$ crossings and black-board framings. We thank
S. Lins, who kindly prepared for us a list of Gauss codes of those
links.

\begin{itemize}
\item Number of different manifolds:  194 \item Number of
different values of $|\lambda_w|$ for these manifolds: 66 \item
Numbers of different manifolds having given values of
$|\lambda_w|$ are presented in Table 1.
\end{itemize}
 \begin{table} [ht]\label{tab1}
  \begin{center}
 \begin{tabular}{|c c|c|c c|c|c c|c|c c|} \cline{1-2}  \cline{4-5}
 \cline{7-8} \cline{10-11}
$\mathbf{|\lambda_w|}$&$\mathbf{\# M^3}$&
&$\mathbf{|\lambda_w|}$&$\mathbf{\# M^3}$&
&$\mathbf{|\lambda_w|}$&$\mathbf{\# M^3}$&
&$\mathbf{|\lambda_w|}$&$\mathbf{\# M^3}$
              \\ \cline{1-2}  \cline{4-5}\cline{7-8}\cline{10-11}
$0$&$19$& &$9/64$&$1$&    &$11/36$&$4$& &$25/44$&$1$ \\ \cline{1-2}
\cline{4-5}\cline{7-8}\cline{10-11}
$1/64$&$1$& &$4/27$&$2$&    &$5/16$&$7$& &$16/27$&$1$ \\ \cline{1-2}
\cline{4-5}\cline{7-8}\cline{10-11}
$1/36$&$4$& &$5/32$&$1$&     &$9/28$&$2$& &$3/5$&$1$ \\
\cline{1-2} \cline{4-5}\cline{7-8}\cline{10-11}
 $1/32$&$1$& &$11/64$&$1$&      &$11/32$&$2$& &$23/36$&$6$ \\
\cline{1-2} \cline{4-5}\cline{7-8}\cline{10-11}
$1/28$&$2$& &$5/28$&$1$&    &$9/26$&$1$& &$11/16$&$3$ \\
\cline{1-2} \cline{4-5}\cline{7-8}\cline{10-11}
$1/26$&$1$& &$3/16$&$10$&    &$13/36$&$2$& &$25/36$&$3$ \\
\cline{1-2} \cline{4-5}\cline{7-8}\cline{10-11}
$3/64$&$1$& &$5/26$&$1$&    &$10/27$&$2$& &$23/32$&$1$ \\
\cline{1-2} \cline{4-5}\cline{7-8}\cline{10-11}
$1/16$&$8$& &$7/36$&$1$&    &$3/8$&$5$& &$29/36$&$1$ \\
\cline{1-2} \cline{4-5}\cline{7-8}\cline{10-11}
$3/44$&$1$& &$1/5$&$4$&    &$25/64$&$1$& &$15/16$&$3$ \\
\cline{1-2} \cline{4-5}\cline{7-8}\cline{10-11}
$2/27$&$2$& &$13/64$&$1$&    &$2/5$&$1$& &$1$&$7$ \\ \cline{1-2}
\cline{4-5}\cline{7-8}\cline{10-11}
$5/64$&$1$& &$9/44$&$1$&    &$13/32$&$1$& &$19/16$&$1$ \\
\cline{1-2} \cline{4-5}\cline{7-8}\cline{10-11}
$3/32$&$2$& &$2/9$&$5$&    &$19/44$&$1$& &$3/2$&$1$ \\ \cline{1-2}
\cline{4-5}\cline{7-8}\cline{10-11}
$1/10$&$2$& &$15/64$&$1$&    &$7/16$&$5$& &$2$&$16$ \\
\cline{1-2} \cline{4-5}\cline{7-8}\cline{10-11}
$5/44$&$2$& &$1/4$&$5$&    &$4/9$&$2$& &$3$&$2$ \\ \cline{1-2}
\cline{4-5}\cline{7-8}\cline{10-11}
$3/26$&$1$& &$13/44$&$2$&    &$13/28$&$1$& &$4$&$6$ \\
\cline{1-2} \cline{4-5}\cline{7-8}\cline{10-11}
$1/8$&$4$& &$8/27$&$2$&    &$17/36$&$4$& &$ -$&$ -$ \\ \cline{1-2}
\cline{4-5}\cline{7-8}\cline{10-11}
$5/36$&$2$& &$3/10$&$1$&    &$1/2$&$6$& &$- $&$- $ \\ \cline{1-2}
\cline{4-5}\cline{7-8}\cline{10-11}

 \end{tabular}
 \caption{ How many manifolds have a given value of $|\lambda_w|$ }
 \end{center}
 \end{table}

We see from the table that for the set of 194 manifolds under
consideration the most popular values of $|\lambda_w|$ are $0$ (19
manifolds), $2$ (16 manifolds), $3/16$ (10 manifolds), $1/16$ (8
manifolds), $5/16$ and $1$ (7 manifolds each). Exactly 30 values
of $|\lambda_w|$ are taken by only one manifold each. In other
words, those manifolds are determined by $|\lambda_w|$. In
average, each value is taken by only 3 different manifolds from
the list. This shows that the Casson-Walker invariant is
unexpectedly informative. For example, the number  of different
first homology groups of the manifolds under consideration is 17,
so the average number of manifolds having a given group is about
11.4.

\end{document}